\DeclareMathOperator{\wind}{wind}
\DeclareMathOperator{\supp}{supp}
\begin{document}

\newtheorem{theorem}{Theorem}
\newtheorem*{thm}{Theorem}
\newtheorem*{proposition}{Proposition}
\newtheorem{conjecture}{Conjecture}
\newtheorem{corollary}{Corollary}
\newtheorem{lemma}{Lemma}

\title[Nonlinear Phase Unwinding of  Functions]{Nonlinear Phase Unwinding of  Functions
}
\author{Ronald R. Coifman}
\keywords{Blaschke factorization, phase unwinding, Dirichlet space, Carleson formula}
\subjclass[2010]{30B50 (primary), and 30A10, 65T99 (secondary)} 
\address[Ronald R. Coifman]{Department of Mathematics, Program in Applied Mathematics, Yale University, New Haven, CT 06510, USA}
\email{coifman@math.yale.edu}

\author{Stefan Steinerberger}
\address[Stefan Steinerberger]{Department of Mathematics, Yale University, New Haven, CT 06510, USA}
\email{stefan.steinerberger@yale.edu}

\begin{abstract} We study a natural nonlinear analogue of Fourier series. Iterative Blaschke factorization allows one to formally write any
holomorphic function $F$ as a series which successively unravels or unwinds the oscillation of the function
$$ F = a_1 B_1 + a_2 B_1 B_2 + a_3 B_1 B_2 B_3 + \dots$$
where $a_i \in \mathbb{C}$ and $B_i$ is a Blaschke product. Numerical experiments point towards rapid
convergence of the formal series but the actual mechanism by which this is happening has yet to be explained. We derive a family of inequalities and use them to
 prove convergence for a large number of function spaces: for example, we have convergence in $L^2$ for functions in the Dirichlet space $\mathcal{D}$. Furthermore, we present a numerically efficient way to expand a function without explicit calculations of the Blaschke zeroes going back to Guido and Mary Weiss.
\end{abstract}

\maketitle

\section{Introduction}
\subsection{Blaschke factorization.} This paper studies a natural nonlinear way for unraveling the oscillation of a function $F:\mathbb{C} \rightarrow \mathbb{C}$ that is holomorphic in a neighborhood of the unit disk. Our starting point is a fundamental theorem in complex analysis (\textit{Blaschke factorization}) stating that any such function can be
decomposed as
$$ F = B \cdot G,$$
where $B$ is a Blaschke product, that is a function of the form
$$ B(z) = z^m\prod_{i \in I}{\frac{\overline{a_i}}{|a_i|}\frac{z-a_i}{1-\overline{a_i}z}},$$
where $m \in \mathbb{N}_{0}$ and $a_1, a_2, \dots \in \mathbb{D}$ are zeroes inside the unit disk $\mathbb{D}$ 
and $G$ has no roots in $\mathbb{D}$. For $|z|=1$ we have $|B(z)| = 1$
which motivates the analogy 
$$B \sim \mbox{frequency and}~G \sim \mbox{amplitude}$$
for the function restricted to the boundary. However, the function $G$ need not be constant: it can be any function that never vanishes
inside the unit disk. If $F$ has roots inside the unit disk, then the Blaschke factorization $F = B \cdot G$ is going to be nontrivial (meaning
$B \not\equiv 1$ and $G \not\equiv F$). $G$ should be 'simpler' than $F$ because the winding number around the origin decreases and we will quantify this in many different
ways. 

\subsection{A formal series.} There is a natural way of iterating Blaschke factorization that is inspired by the power series expansion
of a holomorphic function in 0. Since $G$ has no zeroes inside $\mathbb{D}$, its Blaschke factorization is the trivial one $G = 1 \cdot G$, however,
the function $G(z)-G(0)$ certainly has at least one root inside the unit disk $\mathbb{D}$ and will therefore yield some nontrivial Blaschke factorization
$G(z) - G(0) = B_1 G_1$. Altogether, this allows us to write
\begin{align*}
 F &= B \cdot G\\
&= B \cdot (G(0) + (G(z) - G(0))) \\
&= B \cdot (G(0) + B_1 G_1) \\
&= G(0) B + B B_1 G_1.
\end{align*}
At least formally, an iterative application gives rise to what we call the \textit{unwinding series}
$$ F = a_1 B_1 + a_2 B_1 B_2 + a_3 B_1 B_2 B_3 + a_4 B_1 B_2 B_3 B_4+ \dots$$
This formal expansion first appeared in the PhD thesis of Michel Nahon \cite{nahon}. Given a general function $F$ it is not numerically feasible to actually compute the roots of the function; a crucial insight in \cite{nahon} is that this is not necessary -- one can numerically obtain the Blaschke product
in a stable way by using a method that was first mentioned in a paper of Guido and Mary Weiss \cite{ww} (see also \cite{cw}) and has been investigated with respect to stability by Nahon \cite{nahon} and Letelier and Saito \cite{let}.
Numerical investigation \cite{nahon} indicates that the formal series 
$$ F = a_1 B_1 + a_2 B_1 B_2 + a_3 B_1 B_2 B_3 + a_4 B_1 B_2 B_3 B_4+ \dots$$
will converge to the actual function and, generically, this seems to happen at an exponential rate. 

\subsection{An example}  The following example/picture is taken from the PhD thesis of Michel Nahon \cite{nahon}.
Let us consider the Blaschke factorization of a function given by the projection of a modulated Gaussian on
the boundary onto holomorphic functions
$$ F(e^{i\theta}) = P_{+}\left(e^{-(\theta-\pi)^2} \cdot e^{10 i \theta}\right).$$
Fig. 1 shows the curves $t \rightarrow F(e^{i \theta})$, $t \rightarrow B(e^{i \theta})$ and $t \rightarrow G(e^{i \theta})$ in the complex plane.
A lot of the oscillation (and almost the entire phase) is
transported from $F$ to $B$ leaving $G$ significantly simpler than $F$. It also serves as a good example of the heuristic
$$B \sim \mbox{frequency and}~G \sim \mbox{amplitude}.$$
Figure 1 shows the real and imaginary part of the original signal, its shape when interpreted as a curve $F:\mathbb{T} \rightarrow \mathbb{C}$ and the same information
for $B$ and $G$: $B$ captures most of the oscillation.
\begin{center}
\begin{figure}[h!]
\includegraphics[width = 0.9\textwidth]{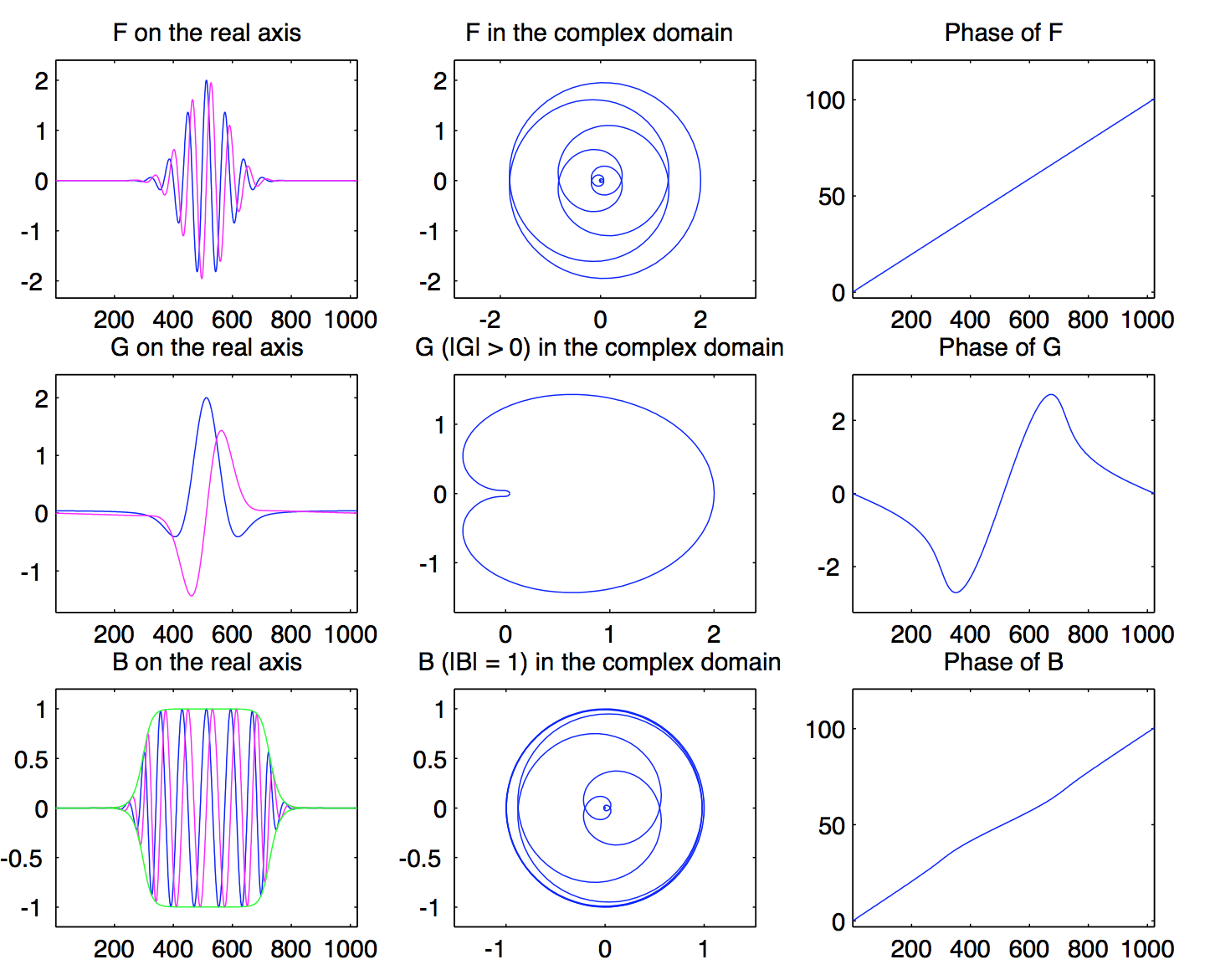}
\caption{A picture taken from Michel Nahon's thesis \cite{nahon}: the behavior of the Blaschke decomposition of $F$ on $\partial \mathbb{D}$.}
\end{figure}
\end{center}

\subsection{Related work.} Blaschke products have long been used in the signal analysis -- often under the name \textit{Malmquist-Takenaka} system. The crucial underlying
fact is that for any two Blaschke products $B_1, B_2$ the two functions $B_1$ and $B_1B_2$ are orthogonal on $L^2(\partial \mathbb{D})$ since
$$ \left\langle B_1, B_1 B_2 \right\rangle_{L^2(\partial \mathbb{D})} = \int_{0}^{2\pi}{\overline{B_1(e^{it})} B_1(e^{it}) B_2(e^{it}) dt} = \int_{0}^{2\pi}{B_2(e^{it}) dt} = 0.$$
This allows naturally to build orthogonal functions via $B_1, B_1B_2, B_1B_2B_3, \dots$ and contains the classical Fourier system $1, z, z^2, \dots$ as a special case.
We refer to papers of Eisner and Pap \cite{pape},  Feichtinger and Pap \cite{papf}, Pap \cite{pap} and Picinbino \cite{pic} for some examples. The unwinding series is first
studied in the PhD thesis of Michel Nahon \cite{nahon}. Subsequently, a method for numerical stabilization in the case of $|F(e^{i\theta})|$ becoming small has
been investigated by Letelier and Saito \cite{let}. The unwinding series has been used by Healy \cite{healy, healy2} in the study of the Doppler effect. Of particular 
importance is a paper of Tao Qian \cite{qtao5} in which he proves the convergence of the unwinding series for $F \in \mathcal{H}^2(\mathbb{T})$. This paper was brought to our attention
after this paper had been completed and we summarize his argument below. 
Closely related is also another approach developed by Qian, Ho, Leong and Wang \cite{qw} (and elaborated in further papers by Qian and collaborators \cite{qtao, qtao1, qtao2, qtao3}),
which they call \textit{adaptive Fourier transform}. The main idea is to use Blaschke products as a library and proceed by a projection pursuit approach, where at each
step one projects onto the element in the library yielding the largest inner product with the function:
$$ f_{n+1} = f_n - \left\langle f_n, B_n\right\rangle B_n $$
where $B_n$ is chosen among all Blaschke products with $n$ zeroes as the one yielding the largest inner product. Since, in particular, the functions $z^n$ are elements of that library, this approach
may be understood as a generalization of Fourier series -- among their results is also an independent rediscovery of the Guido and Mary Weiss algorithm \cite{qtao1} and of the unwinding series \cite{qtao4}.
There are also similarities in spirit with recent work of Mallat \cite{mallat}. Mallat's \textit{scattering transform} is a translation-invariant operator, which is Lipschitz-continuous w.r.t. to $C^1-$ 
diffeomorphisms of the underlying space. The construction is based
on an iterative application of wavelet transforms followed by restriction to the modulus. Our iterative application
$$ G_n(z) = G_n(0) + (G_n(z) - G_n(0)) = G_n(0) + B_{n+1}(z) G_{n+1}(z)$$
uses the modulus of the corresponding functions while the coefficients are given as the mean. This yields a comparable level of stability: at least the leading coefficient is stable under both 
perturbations of the function and reparametrization of the torus. 

\subsection{Notation and Outline.}  This paper deals with holomorphic 'signals' given as functions $f: \mathbb{T} \rightarrow \mathbb{C}$ by regarding
them as the restriction of a holomorphic function $F:\mathbb{C} \rightarrow \mathbb{C}$ on the boundary of the unit disk $\partial \mathbb{D}$. We will therefore use both $\mathbb{T}$
and $\partial \mathbb{D}$ depending on which aspect should be emphasized. We will work with both Sobolev spaces $H$ and Hardy spaces $\mathcal{H}$ (the Hilbert transform, which
appears only briefly, will be $\mathcal{H}_i$). $\mathcal{D}$ denotes the Dirichlet space, $P_+$ the holomorphic projection. \S 2 states the results, \S 3 gives background material
and discusses some possible applications. The proofs are given in \S 4.

\section{Statement of results}
\subsection{Setup.} Given a function $F:\mathbb{C} \rightarrow \mathbb{C}$, we define $G_1$ as the outer
part in the Blaschke factorization of $F$
$$ F = B_1 \cdot G_1$$
and then, iteratively, $G_{n+1}$ as the outer part in the Blaschke factorization
$$ G_n(z) - G_n(0) = B_{n+1}(z) G_{n+1}(z).$$
We are interested in ensuring that $\|G_{n}\|_{X} \rightarrow 0$ in some suitable space $X$ and our main statements
will be formulated that way. We emphasize that the formal series is, from the point of view of complex analysis, the canonical nonlinear extension of the Fourier series which arises from an iterative application of
$G_{n}(z)-G_{n}(0) = z\cdot G_{n+1}(z).$
The Blaschke series, in contrast to the Fourier series, proceeds by factoring out all zeroes inside $\mathbb{D}$ -- this
gives a rise to a much larger library of functions and makes it seem intuitive that one should not only expect convergence
but also faster convergence than for the Fourier series. At the same time, the iteration 
$$ G_n(z) - G_n(0) = B_{n+1}(z) G_{n+1}(z)$$
seems to define a very natural dynamical system on holomorphic function that could be of interest in its own right. 	

\subsection{Algorithm and roots.} We start (assuming for simplicity that there are no roots on the boundary of the unit circle) 
with two basic observations. Recall that a general Blaschke factor has the form
$$ B(z) = z^m\prod_{}{\frac{\overline{a_i}}{|a_i|}\frac{z-a_i}{1-\overline{a_i}z}}.$$
Suppose $F: \mathbb{C} \rightarrow \mathbb{C}$ is holomorphic, $F(0) \neq 0$ and $F$ has the set of roots $R = \left\{r_1, r_2, \dots \right\}$.  
If $ F= B \cdot G$, then the roots of $G$ are simply
$$ \begin{cases} r_i \qquad &\mbox{whenever}~|r_i| > 1 \\
\overline{1/r_i}\qquad &\mbox{whenever}~0<|r_i| < 1. \\
\end{cases}$$
Geometrically, this means that the roots of $F$ outside the unit circle stay unchanged while roots inside the unit circle are inverted across the unit circle. We emphasize that in every step of the algorithm consists of studying not $G_n(z)$ but $G_n(z) - G_n(0)$,
which will have a very different set of roots. However, one immediate easy consequence is the following.

\begin{proposition} Let $F: \mathbb{C} \rightarrow \mathbb{C}$ be given by a polynomial of degree $n$. Then the formal series
converges and is exact after $n$ steps.
\end{proposition}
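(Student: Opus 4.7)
The plan is a short induction on the polynomial degree. The first thing to establish is that Blaschke factorization preserves the class of polynomials: if $P$ is a polynomial of degree $d$ and $P = B \cdot Q$ is its Blaschke decomposition, then $Q$ is itself a polynomial of degree $d - m$, where $m$ is the multiplicity of $0$ as a root of $P$. This can be verified directly from the explicit formula
\[ B(z) = z^m\prod_{|a_i|<1}\frac{\overline{a_i}}{|a_i|}\cdot\frac{z-a_i}{1-\overline{a_i}z}, \]
since each genuine Blaschke factor cancels an interior root $a_i$ of $P$ while contributing the polynomial $1-\overline{a_i}z$ to the numerator (so division is both polynomial-preserving and degree-preserving), while the $z^m$ prefactor strips exactly $m$ from the degree.

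Applied to the original $F$ of degree $n$, this already gives $\deg G_1 \leq n$. The key point is that in subsequent steps we do not factor $G_k$ itself but the polynomial $G_k(z)-G_k(0)$, which by construction vanishes at the origin with some multiplicity $m_k \geq 1$. The observation above, applied with this positive $m_k$, then yields
\[ \deg G_{k+1} \;\leq\; \deg G_k - 1 \]
as long as $G_k$ is not already constant. A direct induction shows $\deg G_k \leq n - k + 1$, so after at most $n$ iterations $G_{n+1}$ has degree zero, i.e.\ it is a constant.

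Once this happens, $G_{n+1}(z)-G_{n+1}(0) \equiv 0$, the iteration terminates, and the partial expansion collapses to a finite exact identity equal to $F$. Convergence of the formal series is then trivial because only finitely many terms are nonzero. There is essentially no obstacle; the argument is pure bookkeeping on polynomial degrees, and its only substantive ingredient is that the map $G \mapsto G-G(0)$ deliberately inserts a zero at the origin at every iteration, which is exactly what drives the degree down.
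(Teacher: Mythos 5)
Your argument is correct and is essentially the paper's own proof, just written out in more detail: the two ingredients are exactly that Blaschke factorization keeps you inside the class of polynomials (with each interior root $a_i\neq 0$ traded for the degree-one factor $1-\overline{a_i}z$, and the $z^m$ prefactor stripping the degree by the multiplicity of the zero at the origin), and that subtracting $G_k(0)$ forces such a zero at every step, so the degree drops by at least one per iteration. Nothing further is needed.
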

\begin{proof}
The algorithm is closed in the set of polynomials. Furthermore, since $G_k(z) - G_k(0)$ has at least one root in $0$, the degree
of the polynomial decreases by at least 1 in every step.
\end{proof}
This argument is more algebraic than analytic and comes with the obvious limitation that it does not give any convergence speed
(analogously, it is not surprising that a trigonometric polynomial can be written as finite Fourier series).  An illustrative example is given by
$$ F(z) = (z-(1-\varepsilon))^{2n}.$$
The Blaschke factorization $F =  B \cdot G$ is easy to write down and 
$$ G(z) = (1-(1-\varepsilon)z)^{2n} = (1-\varepsilon)^{2n} \left(z- \frac{1}{(1-\varepsilon)}  \right)^{2n}.$$
By making $\varepsilon$ sufficiently small, the functions $F$ and $G$ can be made as close to each other in any reasonable function space as we wish.
These sort of examples immediately imply that it is not possible to construct a reasonable norm $X$ with 
$\|G_{n+1}\|_{X} \leq (1-\delta)\|G_{n}\|_{X}$ for some universal $\delta > 0$. 
Exponential convergence, which is observed in practice, will therefore either not always be the case or be the
consequence of an underlying phenomenon ensuring that iterative Blaschke factorization cannot always stay
close to set of functions behaving like these polynomials.

\subsection{Regularity assumptions.}  Blaschke factorization only guarantees a splitting into an \textit{inner} and an \textit{outer} function, where
the inner function itself is given by multiplying a Blaschke product and a singular inner function. It is not clear at this point how one would work
with a singular inner function and we are restricting the further scope of the paper to functions that are holomorphic on a domain $D \supset \mathbb{D}$
that contains an entire disk with radius $1+\varepsilon$, where $\varepsilon > 0$ can be arbitrarily small. This implies that the Blaschke factorization
really factors into a Blaschke product and an outer product; moreover, any (nonzero) function that is holomorphic in a neighborhood of the unit disk
has at most finitely many roots inside the unit disk, which guarantees that all Blaschke products are finite. This is not a serious restriction for applications
as most signals of interest can be approximated by a trigonometric polynomial -- it would be desirable to have a more complete theory from a 
mathematical perspective, however, at this point even the dynamics of iterative Blaschke factorization on polynomials, though convergent, is far from
being understood.

\subsection{A general contraction property.} This section presents our main convergence result. We first state the result in the most general form and comment
on special cases of particular interest further below. We start by introducing two norms on the Hardy space on the unit circle $\mathcal{H}^2(\mathbb{D})$.
Let $0 = \gamma_0 \leq \gamma_1 \leq \dots$ be an arbitrary monotonically increasing sequence of real numbers and let $X$
be the subspace of $\mathcal{H}^2( \mathbb{T})$ for which
$$ \left\|  \sum_{n \geq 0}{a_n z^n \big|_{\partial \mathbb{D}}} \right\|^2_{X} =  \left\|  \sum_{n \geq 0}{a_n e^{i n t}} \right\|^2_{X}  :=  \sum_{n \geq 0}{\gamma_n |a_n|^2} < \infty.$$
We define a second norm $Y$ (semi-norm whenever $\gamma$ is not strictly increasing)
$$ \left\|  \sum_{n \geq 0}{a_n z^n \big|_{\partial \mathbb{D}}} \right\|^2_{Y} =  \left\|  \sum_{n \geq 0}{a_n e^{in t}} \right\|^2_{Y} :=  \sum_{n \geq 0}{(\gamma_{n+1} - \gamma_n) |a_n|^2}.$$

Our main statement is that the Blaschke factorization acts nicely on these spaces. The first part of our statement is known (being ascribed to Digital Signal Processing in \cite{qtao1})
and can be equivalently phrased as follows: given a Blasche decomposition $F = B\cdot G$ and assuming both functions are expanded into a Fourier series
$$ F(z) = \sum_{n=0}^{\infty}{f_n z^n} \qquad \mbox{and} \qquad  G(z) = \sum_{n=0}^{\infty}{g_n z^n},$$
then, for every $N \in \mathbb{N}$
$$   \sum_{n \geq N}^{\infty}{|g_n|^2} \leq  \sum_{n \geq N}^{\infty}{|f_n|^2}.$$
Phrased differently, inner outer factorization shifts the energy to lower frequencies in a strictly monotonous way. Our main tool will
be a refinement of that inequality.

\begin{theorem}[Main result] \label{main} If $F: D \rightarrow \mathbb{C}$ is holomorphic on some neighborhood of the unit disk and has a Blaschke factorization $F = B\cdot G$, then
$$ \|G(e^{i \cdot})\|_{X} \leq \|F(e^{i \cdot})\|_{X}.$$
Moreover, if $F(\alpha)=0$ for some $\alpha \in \mathbb{D}$, we even have
$$ \|G(e^{i \cdot})\|^2_{X} \leq \|F(e^{i \cdot})\|^2_{X} - (1-|\alpha|^2)\left\| \frac{G(e^{i \cdot})}{1 - \overline{\alpha}z}\right\|^2_{Y}.$$
\end{theorem}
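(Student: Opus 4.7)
\emph{Plan.} The plan is to reduce everything to the case of a single Blaschke factor, where a short calculation reveals a pointwise-in-frequency identity that essentially contains all the content of the theorem. Concretely, since $F$ is holomorphic in a neighborhood of the closed unit disk, the Blaschke product is finite, $B(z) = \prod_{i=1}^{K} b_{\alpha_i}(z)$ (absorbing any factor $z^m$ by allowing $\alpha_i = 0$). We may choose any ordering of the zeros to set up a chain $F = F_0, F_1, \ldots, F_K = G$ with $F_k = b_{\alpha_k} F_{k+1}$. Everything will then follow from the single-factor identity
\begin{equation*}
\|f\|_X^2 - \|g\|_X^2 = (1-|\alpha|^2)\left\|\frac{g}{1-\overline{\alpha}z}\right\|_Y^2 \qquad \text{whenever } f = b_\alpha g.
\end{equation*}

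To prove this identity, I would introduce the auxiliary function $h(z) := g(z)/(1-\overline{\alpha}z)$. The Blaschke relation $(1-\overline{\alpha}z)f(z) = c(z-\alpha)g(z)$ with $c = \overline{\alpha}/|\alpha|$ unimodular becomes $f(z) = c(z-\alpha)h(z)$ and $g(z) = (1-\overline{\alpha}z)h(z)$. Reading off Fourier coefficients produces the recursions $f_n = c(h_{n-1} - \alpha h_n)$ and $g_n = h_n - \overline{\alpha}h_{n-1}$ (with $h_{-1}=0$), and a direct expansion gives the pleasant cancellation
\begin{equation*}
|f_n|^2 - |g_n|^2 = (1-|\alpha|^2)\bigl(|h_{n-1}|^2 - |h_n|^2\bigr).
\end{equation*}
Telescoping yields $\sum_{n \geq N}(|f_n|^2 - |g_n|^2) = (1-|\alpha|^2)|h_{N-1}|^2 \geq 0$ for every $N \geq 1$ (already recovering the tail monotonicity quoted in the text, with equality at $N = 0$ reflecting $|b_\alpha|=1$ on $\partial \mathbb{D}$). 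Plugging this into the Abel-type rearrangement
\begin{equation*}
\|f\|_X^2 - \|g\|_X^2 = \sum_{k \geq 0}(\gamma_{k+1}-\gamma_k)\sum_{n \geq k+1}(|f_n|^2 - |g_n|^2)
\end{equation*}
produces $(1-|\alpha|^2)\sum_{k \geq 0}(\gamma_{k+1}-\gamma_k)|h_k|^2 = (1-|\alpha|^2)\|h\|_Y^2$, finishing the single-factor step.

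With the identity in hand, iterating along the chain yields the telescoping decomposition
\begin{equation*}
\|F\|_X^2 - \|G\|_X^2 = \sum_{k=0}^{K-1}(1-|\alpha_k|^2)\left\|\frac{F_{k+1}}{1-\overline{\alpha_k}z}\right\|_Y^2,
\end{equation*}
which is non-negative term by term, giving the first inequality $\|G\|_X \leq \|F\|_X$. For the refinement, we exploit the freedom in ordering the zeros: given that $F(\alpha)=0$, simply arrange the chain so that $\alpha$ is the \emph{last} zero divided out, i.e.\ $\alpha_{K-1}=\alpha$ and $F_K = G$. The corresponding term in the sum is then exactly $(1-|\alpha|^2)\|G/(1-\overline{\alpha}z)\|_Y^2$, and discarding the remaining non-negative terms yields the stated bound. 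The main conceptual obstacle is spotting the change of variables $h = g/(1-\overline{\alpha}z)$ that makes $|f_n|^2 - |g_n|^2$ collapse to a telescoping form; afterwards the proof is really just bookkeeping, and the reordering trick makes the $Y$-norm on the right-hand side naturally the one evaluated against $G$ itself.
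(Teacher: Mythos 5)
Your proposal is correct and follows essentially the same route as the paper: the single-factor identity $\|f\|_X^2-\|g\|_X^2=(1-|\alpha|^2)\|h\|_Y^2$ (your $h$ is the paper's $F$ in its comparison of $(z-\alpha)F$ versus $(1-\overline{\alpha}z)F$), the chain that inverts one root at a time, and the device of retaining only the gain from the last step with $\alpha$ divided out last. The only differences are cosmetic: you phrase the Abel summation via tail sums and work with genuine Blaschke factors rather than the linear factors modulo unimodular constants.
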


The most important implication is convergence of the
unwinding series in the space $Y$ if the initial data lies in $X$. The argument is straight-forward: the construction of the unwinding series proceeds by setting
$$ B_{n+1}(z) G_{n+1}(z) = G_n(z) - G_n(0)$$
and thus, by construction, the functions always have a root in $\alpha = 0$. Furthermore, adding and subtracting constants has no impact
on $\left\|\cdot \right\|_{X}$ because $\gamma_0 = 0$ and therefore
\begin{align*}
\|G_{n+1}(e^{i \cdot})\|^2_{Y} &\leq \|G_{n}(e^{i \cdot})-G_{n}(0)\|^2_{X} - \|G_{n+1}(e^{i \cdot})\|^2_{X} \\
&= \|G_{n}(e^{i \cdot})\|^2_{X} - \|G_{n+1}(e^{i \cdot})\|^2_{X}
\end{align*}
Summing on both sides yields a telescoping series and thus
$$ \sum_{n=2}^{\infty}{\|G_n(e^{i \cdot})\|^2_{Y}} \leq \|F(e^{i \cdot})\|^2_{X},$$
which implies that $\|G_n(e^{i \cdot})\|_{Y} \rightarrow 0$. After $n$ steps, we have the equation
$$ F = a_1 B_1 + a_2 B_1 B_2 + \dots +  a_{n-1} B_1 \cdots B_{n-1} + B_1 B_2 \cdots B_{n-1} (G_n-G_n(0))$$
and exploiting that $|B_i| = 1$, we have that
$$ \| f - (a_1 B_1 + \dots + a_{n-1} B_1 \cdots B_{n-1})\|_{L^2(\partial \mathbb{D})} = \|G_n-G_n(0)\|_{L^2(\partial \mathbb{D})}.$$
This motivates putting special emphasis on the space $X$ arising from $\gamma_n = n$ for which $Y = L^2$. This
space is also known as the Dirichlet space $\mathcal{D}$ and has special geometric significance and structure; 
for algebraic reasons we can get an even sharper inequality in that case (see below). 
Another natural (from a geometric perspective) space is given by $\gamma_n = n^2$, where $X = H^1$, $Y = \mathcal{D}$ and 
Theorem 1 can be alternatively proven using Green's formula (see below). 
All Sobolev spaces $H^s$ with $s > 0$ are also special cases: the statement implies that for $F(e^{i \cdot}) \in H^s$ with $s > 0$,
we have convergence in $H^{s-\frac{1}{2}}$. 
All these results have a completely analogous version on the upper half-space $\mathbb{C}_{+}$ with Blaschke-type
products being defined on the real line $\mathbb{R}$; even the proofs translate almost verbatim (see below).

\subsection{A slight generalization.} The unwinding series can be phrased slightly more generally than we have done up to now: indeed, at the $n-$th
step, we could actually pick an arbitrary $\alpha_n \in \mathbb{D}$ and proceed via
$$ B_{n+1}(z) \cdot G_{n+1}(z) = G_n(z) - G_n(\alpha_n).$$
Clearly, $G_n(z) - G_n(\alpha_n)$ is guaranteed to have at least one root in the unit disk because it has one in $\alpha_n \in \mathbb{D}$.
Theorem 1 was formulated in a completely general way (for a general root $\alpha$) and applies to this more general case as well (at the
cost of introducing a factor $1-|\alpha_n|^2$). In choosing a `good' value for $\alpha_n$, one naturally encounters the quantity
$$ \arg\max_{z \in \mathbb{D}}{ (1-|z|^2)|G(z)|}.$$
We set $\alpha_n = 0$, which -- in practice -- does not seem to make a big difference because the factor $(1-|z|^2)$ ensures that the
maximum cannot be assumed on the boundary. While maximizing the quantity can lead to better results, we have observed that
 $\alpha_n = 0$ seems to always be doing fairly well in practice. We will assume $\alpha_n = 0$ throughout the rest of the paper but 
emphasize that the algorithm is slightly more general. One instance where this could be useful is whenever $G_n(z)-G_n(0)$ has a
root on $\partial \mathbb{D}$: in order not to lose information on the phase, it is desirable for the performance of the Guido \& Mary Weiss
algorithm that $|G_n(z)-G_n(0)| > 0$ for all $|z| = 1$.
Whenever this is not the case, one could use $G_n(z) - G_n(\alpha)$ for a value $\alpha \in \mathbb{D}$ close to
the origin such that
this function has no roots on the boundary.

\subsection{A special case.} Let us now explore the special cases with obvious geometric significance in greater
detail. We identify functions $F$ that are holomorphic in a neighborhood of the unit disk with maps $\gamma: \mathbb{T} \rightarrow \mathbb{R}^2$ via
$$ \gamma_F(t) := F(e^{it}).$$ This is motivated by the fact that in the algorithm we obtain $G_{n+1}$ not from $G_n(z)$ but from
$G_n(z) - G_n(0)$ and it is therefore natural to study translation-invariant (geometric) quantities depending on $G_n$.
Let us consider a particular example $F(z) = (z+0.3+i/3)(z-0.2)(z-1.5-i/2)$ (taken essentially at random) and the Blaschke factorization $F(z) = B(z) G(z)$ (see Fig. 2 and Fig. 3).
Since $G(z)$ has no roots in $\mathbb{D}$, the argument principle implies that $G(z)$ does not wind around 0. Note that furthermore 
$$|F(e^{it})| = |G(e^{it})|$$ for all $t \in \mathbb{R}$.
\begin{figure}[h!]
\includegraphics[width = 0.5\textwidth]{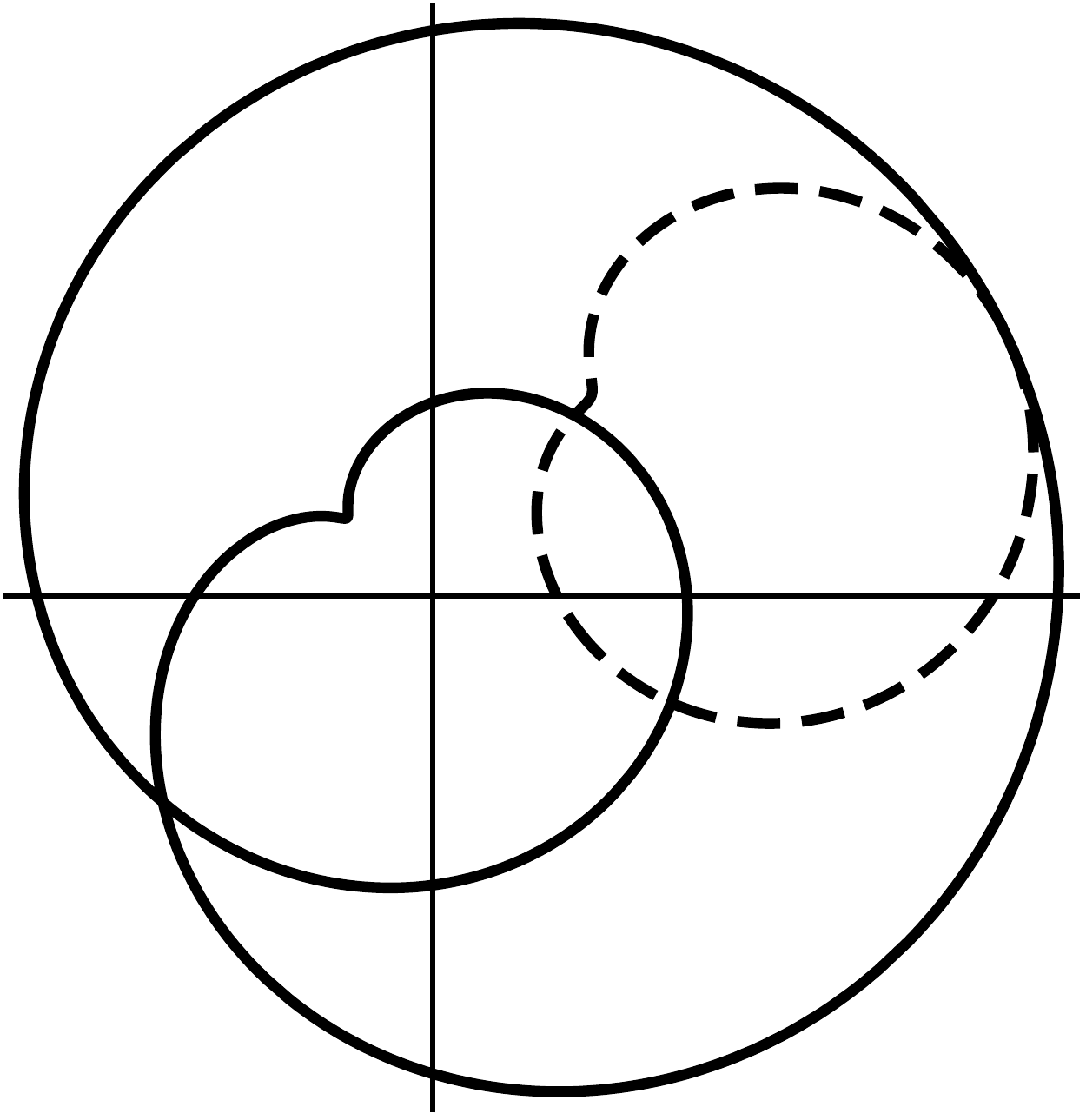}
\caption{$F(e^{it})$ and $G(e^{it})$ (dashed) for a cubic polynomial $F$.}
\end{figure}
As suggested by the picture (and many others like it), one would 
expect that the length of the curve $\gamma_{G}$ is, at least generically, smaller than that of $\gamma_{F}$ but we have been unable 
to prove that; instead, we were able to obtain that result for the natural $L^2-$version of length, sometimes called the \textit{energy} of a curve
$$ \mbox{energy}(\gamma) = \int_{0}^{2\pi}{|\gamma'(t)|^2dt}.$$
By H\"older's inequality, we have that
$$ \mbox{length}(\gamma)^2 = \left(  \int_{0}^{2\pi}{|\gamma'(t)|dt} \right)^2 \leq 2\pi  \left(  \int_{0}^{2\pi}{|\gamma'(t)|^2dt} \right) = 2\pi \cdot \mbox{energy}(\gamma).$$
Therefore, in particular, if the energy of a curve tends to 0, then so will the length. Algebraic simplifications allow us to quantify the decrease of the $H^1-$norm of
the boundary function in terms of its $L^2-$norm weighted against the Poisson kernel of the roots: the argument is not as sharp as the one formulated for the
Dirichlet space further below but is very elementary (using Green's theorem and geometric considerations).

\begin{theorem}\label{fullformula} Let $F:D \rightarrow \mathbb{C}$ be holomorphic in some neighborhood of the unit disk. Then,
if $\left\{\alpha_j:j \in J\right\}$ are the roots of $F$ in $\mathbb{D}$ and $ F = B \cdot G,$ 
$$ \int_{0}^{2\pi}{| G'(e^{i\theta})|^2 dt} \leq  \int_{0}^{2\pi}{| F'(e^{i\theta})|^2 dt}  - \int_{0}^{2\pi}{|G(e^{it})|^2 \sum_{j \in J}{\frac{1-|\alpha_j|^2}{|e^{it}-\alpha_j|^2}}} dt.$$
\end{theorem}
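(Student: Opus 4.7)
The plan is to prove the theorem in two stages: first reduce to a single Blaschke factor by iteration, then handle the single-factor case using Green's theorem.

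\textbf{Reduction to a single factor.} Enumerate the zeroes $\alpha_1, \dots, \alpha_m$ of $F$ in $\mathbb{D}$, write $B = B_{\alpha_1}\cdots B_{\alpha_m}$, and set $G_0 = F$ and $G_k = G_{k-1}/B_{\alpha_k}$ for $k \geq 1$, so that $G_m = G$. Because $|B_{\alpha_j}(e^{i\theta})| \equiv 1$ on $\partial\mathbb{D}$, one has $|G_k(e^{i\theta})| = |G(e^{i\theta})|$ on the boundary for every $k$. Hence summing the single-factor inequality
$$
\int_0^{2\pi} |G_k'|^2\,d\theta \leq \int_0^{2\pi} |G_{k-1}'|^2\,d\theta - \int_0^{2\pi} |G|^2\frac{1-|\alpha_k|^2}{|e^{i\theta}-\alpha_k|^2}\,d\theta
$$
over $k = 1, \dots, m$ telescopes to the statement of the theorem.

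\textbf{Green's function identity for the single-factor case.} For $F = B_\alpha G$ introduce the Green's function $g_\alpha(z) := -\log|B_\alpha(z)| \geq 0$ on $\mathbb{D}$: it vanishes on $\partial\mathbb{D}$, satisfies $\Delta g_\alpha = -2\pi\delta_\alpha$, and has outward normal derivative $\partial_n g_\alpha = -P_\alpha$ on $\partial\mathbb{D}$, where $P_\alpha(\theta) = (1-|\alpha|^2)/|e^{i\theta}-\alpha|^2$. Applying Green's second identity with $u = g_\alpha$ and $v = |G|^2$ (using $\Delta|G|^2 = 4|G'|^2$ since $G$ is holomorphic) yields
$$
\int_{\partial\mathbb{D}} |G|^2 P_\alpha\,d\theta = 2\pi|G(\alpha)|^2 + 4\int_\mathbb{D} g_\alpha |G'|^2\,dA.
$$
Repeating the computation with $v = |F|^2 - |G|^2$ (which vanishes on $\partial\mathbb{D}$) and using $F(\alpha) = 0$ produces $\int_\mathbb{D} g_\alpha(|F'|^2-|G'|^2)\,dA = \tfrac{\pi}{2}|G(\alpha)|^2$; the two identities combine to
$$
\int_{\partial\mathbb{D}} |G|^2 P_\alpha\,d\theta = 4\int_\mathbb{D} g_\alpha|F'|^2\,dA.
$$

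\textbf{Closing the gap.} It remains to show
$$
4\int_\mathbb{D} g_\alpha|F'|^2\,dA \leq \int_{\partial\mathbb{D}} \bigl(|F'|^2 - |G'|^2\bigr)\,d\theta.
$$
To render both sides comparable, apply Green's theorem once more with the auxiliary potential $u = 1-|z|^2$ (vanishing on $\partial\mathbb{D}$, with $\Delta u = -4$ and $\partial_n u = -2$) paired with $v = |F'|^2$ and $v = |G'|^2$, producing $\int_{\partial\mathbb{D}}|F'|^2\,d\theta = 2\int_\mathbb{D}(1-|z|^2)|F''|^2\,dA + 2\int_\mathbb{D}|F'|^2\,dA$ and the analogue for $G$. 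The product rule $F' = B_\alpha' G + B_\alpha G'$ then converts the inequality into a pointwise algebraic comparison on $\mathbb{D}$. The main obstacle is the cross-term $2\operatorname{Re}(B_\alpha\overline{B_\alpha'}\bar G G')$ that appears when $|F'|^2$ is expanded; it is indefinite pointwise and becomes tractable only after integration by parts exploiting that $G$ has no zeroes in $\mathbb{D}$, so $\log G$ is holomorphic and $\arg G$ is the harmonic conjugate of $\log|G|$. A useful sharpness check is that the linear case $F(z) = z - \alpha$ saturates the theorem with equality, so any rearrangement producing a non-negative remainder must collapse to zero there, which tightly constrains the form of the identity one is looking for.
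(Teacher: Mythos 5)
Your reduction to a single Blaschke factor via telescoping and the identity $|G_k(e^{i\theta})| = |G(e^{i\theta})|$ on $\partial\mathbb{D}$ is exactly the paper's strategy, and your two Green's-identity computations are correct: they combine to the (true) identity $\int_{\partial\mathbb{D}}|G|^2P_\alpha\,d\theta = 4\int_{\mathbb{D}}g_\alpha|F'|^2\,dA$. The problem is that this identity merely rewrites the Poisson-kernel term; the inequality you are left with in ``Closing the gap,''
$$4\int_{\mathbb{D}}g_\alpha|F'|^2\,dA \;\leq\; \int_{\partial\mathbb{D}}\bigl(|F'|^2-|G'|^2\bigr)\,d\theta,$$
\emph{is} the single-factor case of the theorem, so at this point no progress has been made, and you do not prove it. What you offer instead is a description of the obstacle (the indefinite cross-term $2\operatorname{Re}(B_\alpha\overline{B_\alpha'}\,\overline{G}G')$) and a consistency check on $F=z-\alpha$; neither constitutes an argument. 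If one pushes your own route through --- using $u=1-|z|^2$ together with Carleson's identity $2\int_{\mathbb{D}}(|F'|^2-|G'|^2)\,dA=\int_{\partial\mathbb{D}}|G|^2P_\alpha\,d\theta$ --- the required inequality collapses to $\int_{\mathbb{D}}(1-|z|^2)\bigl(|F''|^2-|G''|^2\bigr)\,dA\geq 0$, equivalently $\sum_n n(n-1)|f_n|^2 \geq \sum_n n(n-1)|g_n|^2$, which is Theorem~\ref{main} with $\gamma_n=n(n-1)$; so the gap can be closed that way, but as written the proposal stops exactly where the work begins.

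For comparison, the paper's proof of the single-factor step is elementary: writing $f=(z-a)F$ and $g=(1-\overline{a}z)F$ and expanding $|f'|^2-|g'|^2$ on $|z|=1$ (where $|z-a|=|1-\overline{a}z|$), the quadratic terms in $F'$ cancel, the main term $(1-|a|^2)|F|^2$ survives, and the leftover cross-term integrates to $4(1-|a|^2)$ times the winding-number-weighted area enclosed by the curve $t\mapsto F(e^{it})$, which equals $4(1-|a|^2)\int_{\mathbb{D}}|F'|^2\,dA\geq 0$ by Green's theorem and holomorphy. That single sign observation is the content your sketch is missing; everything before it in your write-up is a correct but inessential reformulation.
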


Exploiting an additional geometric argument based on random projections and the uncertainty principle, we
were able to obtain the following estimate, which controls the error in $L^{\infty}(\mathbb{T})$.
\begin{corollary}\label{fullderiv} Suppose $F:D \rightarrow \mathbb{C}$ converges on some neighborhood of the unit disk. Then
the formal series converges in $L^{\infty}$. Moreover,
$$ \left|\left\{n \in \mathbb{N}: \|G_n(z) - G_n(0)\|_{L^{\infty}(\partial \mathbb{D})} \geq \varepsilon \right\}\right| \lesssim \left(  \int_{0}^{2\pi}{|F'(e^{it})|^2 dt }\right)^2/\varepsilon^4.$$
\end{corollary}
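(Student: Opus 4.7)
The plan is to combine the $L^2$-summability that comes from Theorem \ref{main} with a uniform bound on $\|G_n'\|_{L^2}$ that comes from Theorem \ref{fullformula}, and close the loop with a one-dimensional Sobolev/uncertainty inequality that forces a function of large sup-norm and controlled derivative to have large $L^2$-norm. Throughout, write $h_n(e^{it}) := G_n(e^{it}) - G_n(0)$ and $M := \|F'\|_{L^2(\partial\mathbb{D})}$. Since $|B_j|\equiv 1$ on $\partial \mathbb{D}$, the $L^\infty$-error of the $n$-th partial sum of the unwinding series equals $\|h_n\|_{L^\infty(\partial \mathbb{D})}$, so $L^\infty$-convergence and the quantitative bound both reduce to controlling the cardinality of $\{n : \|h_n\|_{L^\infty} \geq \varepsilon\}$.

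First I would invoke Theorem \ref{main} with the weights $\gamma_n = n$ (so $X = \mathcal{D}$, $Y = L^2$). The telescoping argument that immediately follows Theorem \ref{main} then yields
$$ \sum_{n} \|h_n\|_{L^2(\partial \mathbb{D})}^2 \;\leq\; \sum_{n} \|G_n\|_{L^2(\partial \mathbb{D})}^2 \;\leq\; \|F\|_{\mathcal{D}}^2 \;\leq\; M^2, $$
using that subtracting the constant $G_n(0)$ only lowers the $L^2$-norm and that $\|F\|_{\mathcal{D}}^2 = \sum n |f_n|^2 \leq \sum n^2 |f_n|^2 = M^2$ for the Fourier expansion $F(z) = \sum f_n z^n$. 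Next, I would apply Theorem \ref{fullformula} iteratively, at each step with $F$ replaced by $h_n$. Because $(G_n - G_n(0))' = G_n'$, the inequality gives $\|G_{n+1}'\|_{L^2} \leq \|G_n'\|_{L^2}$ for every $n$, so the sequence $\|h_n'\|_{L^2} = \|G_n'\|_{L^2}$ is non-increasing and uniformly dominated by $M$.

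The third and decisive ingredient is the elementary fact that on $\mathbb{T}$ one has $\|h\|_{L^2}^2 \gtrsim \varepsilon^4/M^2$ whenever $\|h\|_{L^\infty}\geq \varepsilon$ and $\|h'\|_{L^2}\leq M$: picking $t_0$ with $|h(t_0)|\geq \varepsilon$, Cauchy--Schwarz gives $|h(t) - h(t_0)| \leq M\sqrt{|t-t_0|}$, so $|h|\geq \varepsilon/2$ on an interval of length at least $\varepsilon^2/(2M^2)$ centered at $t_0$, yielding the desired $L^2$-lower bound. This is the ``uncertainty principle'' alluded to in the statement: a function cannot be simultaneously peaky in $L^\infty$ and small in $L^2$ once its derivative is tamed in $L^2$. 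A reduction to real-valued $h$ via the ``random projection'' $h \mapsto \mathrm{Re}(e^{i\theta}h)$ (averaged over $\theta$) is available if one prefers to work pointwise on the real part, though not strictly necessary.

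Combining the three ingredients, every $n$ in the bad set $\{n : \|h_n\|_{L^\infty}\geq\varepsilon\}$ contributes at least a constant multiple of $\varepsilon^4/M^2$ to the sum $\sum_n \|h_n\|_{L^2}^2 \leq M^2$, so the bad set has cardinality $\lesssim M^4/\varepsilon^4 = \|F'\|_{L^2}^4/\varepsilon^4$, which is exactly the stated inequality; $L^\infty$-convergence follows since this bound holds for every $\varepsilon>0$. The only step I expect to need genuine care is iterating Theorem \ref{fullformula} to propagate the uniform derivative bound through all stages and verifying that the negative Poisson-kernel term never disrupts the monotonicity $\|G_{n+1}'\|_{L^2}\leq \|G_n'\|_{L^2}$; the rest is a one-line combination of the three estimates.
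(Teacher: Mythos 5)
Your proposal is correct and follows the same skeleton as the paper's proof: a summable sequence of $L^2$-gains, a uniform bound $\|G_n'\|_{L^2}\leq \|F'\|_{L^2}$ obtained by iterating Theorem \ref{fullformula}, and a Sobolev-type lower bound converting $\|h_n\|_{L^\infty}\geq\varepsilon$ into $\|h_n\|_{L^2}^2\gtrsim \varepsilon^4/\|F'\|_{L^2}^2$, so that each bad index costs a fixed quantum out of a bounded budget. (Whether one telescopes $\sum_n\|h_n\|_{L^2}^2\leq \|F\|_{\mathcal{D}}^2\lesssim\|F'\|_{L^2}^2$ via Theorem \ref{main}, as you do, or reads the drop of $\int|G_n'|^2$ directly off Theorem \ref{fullformula}, as the paper does, is the same bookkeeping.) The one place you genuinely diverge, and improve on the paper, is the third ingredient: the paper first reduces the complex-valued curve $\gamma_n(t)=G_n(e^{it})-G_n(0)$ to a real scalar via a random-projection lemma (choosing a direction $\nu$ that preserves the $L^2$- and $L^\infty$-sizes up to absolute constants) and then applies a Sobolev embedding for real functions that change sign, anchored at a zero of $h$. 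Your version applies H\"older-$\tfrac12$ continuity $|h(t)-h(t_0)|\leq \|h'\|_{L^2}|t-t_0|^{1/2}$ directly to the complex-valued $h_n$, anchored at the point where the maximum is attained, which makes both the projection lemma and the sign-change hypothesis unnecessary; the only housekeeping is the trivial case where the interval of length $\varepsilon^2/(2M^2)$ exceeds $2\pi$, in which case $|h_n|\geq\varepsilon/2$ everywhere and the bound is immediate. Finally, the step you flag as needing "genuine care" is a non-issue: the Poisson-kernel term in Theorem \ref{fullformula} is nonnegative and is subtracted on the right-hand side, so it can only reinforce the monotonicity $\|G_{n+1}'\|_{L^2}\leq\|G_n'\|_{L^2}$, never disrupt it.
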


\subsection{Winding numbers and the Dirichlet space.} This section is entirely motivated by geometric considerations: we will discuss properties of closed curves in $\mathbb{C}$ given by
$\gamma_F(t) = F(e^{it})$.
The winding number around a point $z$ with respect to a curve $\gamma$ is defined as
$$ \wind_{\gamma}(z_0) := \frac{1}{2\pi i} \int_{\gamma}{ \frac{dz}{z - z_0}} \qquad \mbox{whenever}~z_0 \notin \gamma.$$
Examples strongly suggest that 'the \textit{average} weighted winding number'
$$ \int_{\mathbb{C}}{\wind_{\gamma}(z)dz} \qquad \mbox{should decrease.}$$
This quantity can be regarded as \textit{weighted} area, which is the area enclosed by the curve weighted
with the winding number. It arises naturally when one applies Green's formula to compute the area surrounded by a simple, closed curve
$\gamma:[0,2\pi] \rightarrow \mathbb{R}^2$ oriented counter-clockwise and written as $\gamma(t) = (x(t), y(t))$ via
$$ \frac{1}{2}\int_{0}^{2\pi}{(x(t)\dot y(t) - \dot{x}(t)y(t)) dt}.$$
Applying the very same formula in the case of a non-simple closed curve naturally gives rise to
$$ \frac{1}{2}\int_{0}^{2\pi}{(x(t)\dot y(t) - \dot{x}(t)y(t)) dt} = \int_{\mathbb{C}}{\wind_{\gamma}(z)dz}.$$
This interpretation of the area formula dates back at least to a 1936 paper of Rado \cite{rado}. 
If $F$ is holomorphic, then we have 
$$ \int_{\mathbb{C}}{\wind_{\gamma_F}(z)dz} = \int_{\mathbb{D}}{|F'(z)|^2dz}.$$

\begin{figure}[h!]
\includegraphics[width = 0.5\textwidth]{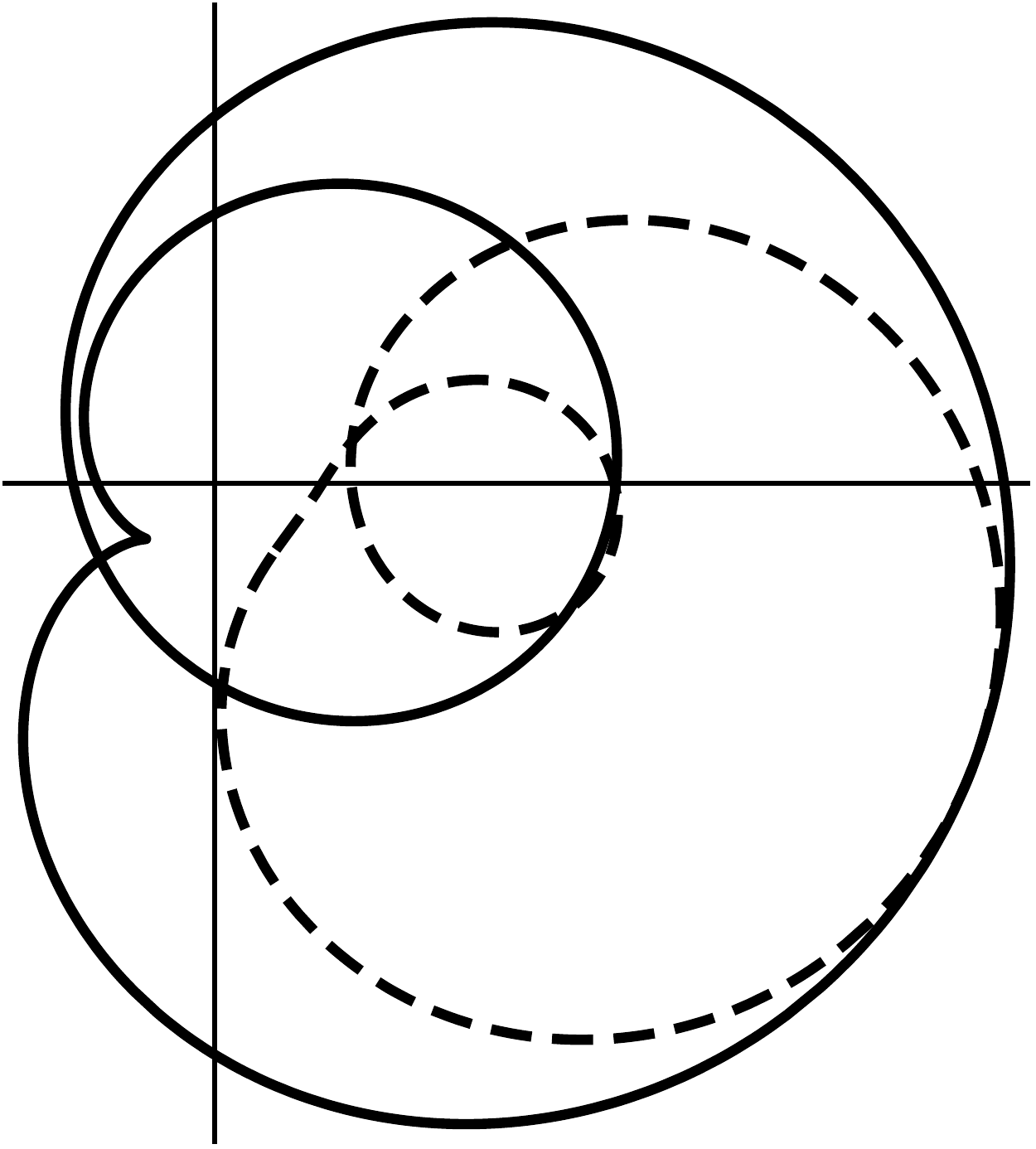}
\caption{$F(e^{it})$ (blue) given by the cubic polynomial. $G(e^{it})$ (dashed) has the same maximum winding but over a smaller area.}
\end{figure}


Writing that representation in Fourier space gives the so-called \textit{area theorem} stating that if
$$ f(z) = a_0 + a_1z + a_2z^2 + \dots, \quad \mbox{then} ~  \int_{\mathbb{C}}{\wind_{\gamma_F}(z)dz} = \pi\sum_{n=1}^{\infty}{n|a_n|^2}.$$
The \textit{Dirichlet space} 
$$\mathcal{D} = \left\{f:\mathbb{D} \rightarrow \mathbb{C}\big| ~ f~\mbox{holomorphic}~\mbox{and}~\int_{\mathbb{D}}{|f'(z)|^2dz} < \infty\right\}.$$
was first introduced by Beurling and Deny \cite{beur, beur2} .When equipped with the inner product
$$ \left\langle f, g \right\rangle_{\mathcal{D}} = \left\langle f, g \right\rangle_{\mathcal{H}^2} + \frac{1}{\pi}\int_{\mathbb{D}}{f'(z)\overline{g'(z)}dz},$$
it becomes a Hilbert space.
A monotonicity statement for Blaschke decomposition in that space is well-known and follows at once from Carleson's formula \cite{ca} (see also \cite[Theorem 4.1.3]{primer}). 

\begin{corollary}[Special case of Carleson's formula] \label{halfformula} Assume $F \in \mathcal{H}^{\infty}(\mathbb{D})$ with roots $\left\{\alpha_j:j\in J\right\}$ in $\mathbb{D}$ and has the Blaschke factorization $ F = B \cdot G$, then
$$ \int_{\mathbb{D}}{|F'(z)|^2dz} =  \int_{\mathbb{D}}{|G'(z)|^2dz}+ \frac{1}{2}\int_{\partial \mathbb{D}}{|G|^2 \sum_{j \in J}{\frac{1-|\alpha_j|^2}{|z-\alpha_j|^2}}}.$$
\end{corollary}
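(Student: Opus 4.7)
The plan is to give a direct, self-contained proof via the divergence theorem, which is cleaner than invoking Carleson's original argument since we are in the easy case with no singular inner factor. The starting observation is that for a holomorphic function $F$ one has $\Delta|F|^2 = 4|F'|^2$, a consequence of $\Delta = 4\partial\bar{\partial}$. Integrating over $\mathbb{D}$ and applying the divergence theorem,
$$4\int_{\mathbb{D}}|F'(z)|^2\,dA(z) = \int_{\partial\mathbb{D}} \partial_r|F|^2 \,d\theta,$$
with the analogous identity for $G$. Subtracting reduces the problem to evaluating the boundary integral $\int_{\partial\mathbb{D}}\bigl(\partial_r|F|^2 - \partial_r|G|^2\bigr)\,d\theta$.

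The next step is to exploit the Blaschke factorization together with the defining property $|B|=1$ on $\partial\mathbb{D}$. Since $|F|^2 = |B|^2|G|^2$, the product rule gives on the boundary
$$\partial_r|F|^2 - \partial_r|G|^2 = |G|^2 \,\partial_r|B|^2,$$
so everything reduces to computing $\partial_r|B|^2$ at $r=1$. For a single Blaschke factor $b_\alpha(z) = (z-\alpha)/(1-\bar{\alpha}z)$ I would use the well-known identity
$$1-|b_\alpha(z)|^2 = \frac{(1-|z|^2)(1-|\alpha|^2)}{|1-\bar{\alpha} z|^2},$$
and observe that since $(1-r^2)$ vanishes at $r=1$, the only surviving term when differentiating in $r$ is the one in which $\partial_r$ lands on $(1-r^2)$, giving $\partial_r|b_\alpha|^2\big|_{r=1} = 2(1-|\alpha|^2)/|e^{i\theta}-\alpha|^2$. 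For a finite Blaschke product $B = \prod_j b_{\alpha_j}$, another application of the product rule, together with the fact that each factor has modulus $1$ on $\partial\mathbb{D}$, yields
$$\partial_r|B|^2\big|_{r=1} = 2\sum_j \frac{1-|\alpha_j|^2}{|e^{i\theta}-\alpha_j|^2}.$$
Assembling these computations, together with the normalizing factor $\tfrac{1}{4}$ from $\Delta|F|^2 = 4|F'|^2$, produces the claimed identity.

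The argument is essentially routine; there is no single hard step. The only place that requires care is the evaluation of $\partial_r|B|^2$ at $r=1$, because without the observation that each individual factor $|b_{\alpha_j}|$ equals $1$ on the boundary the product rule would generate an unwieldy sum of cross-terms rather than a single clean sum over $j$. The paper's blanket regularity assumption that $F$ is holomorphic in a neighborhood of $\overline{\mathbb{D}}$ ensures that $B$ is a finite Blaschke product, so the sum over $j$ is finite and every manipulation is elementary; extending to general $F\in\mathcal{H}^\infty(\mathbb{D})$ with only a Blaschke inner part (as stated) would require a standard approximation argument, letting a partial product $B_N$ exhaust the zero set and passing to the limit using monotone convergence on the right-hand side.
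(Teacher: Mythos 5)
Your proof is correct, and it takes a genuinely different route from the paper. The paper derives the corollary from the same Fourier-coefficient computation that drives Theorem \ref{main}: it flips the roots one at a time, replacing $(z-\alpha_j)$ by $(1-\overline{\alpha_j}z)$, uses the identity $\|(z-\alpha)F\|_{\mathcal{D}}^2 - \|(1-\overline{\alpha}z)F\|_{\mathcal{D}}^2 = (1-|\alpha|^2)\|F\|_{L^2}^2$, and then exploits $|z-\alpha| = |1-\overline{\alpha}z|$ on $\partial\mathbb{D}$ to rewrite each telescoping gain as $\int_{\partial\mathbb{D}}|G|^2/|z-\alpha_j|^2$ so that the contributions sum in closed form. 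You instead convert the area integrals to boundary integrals via $\Delta|F|^2 = 4|F'|^2$ and the divergence theorem, reduce to $|G|^2\,\partial_r|B|^2$ on $\partial\mathbb{D}$ using $|B|=1$ there, and evaluate $\partial_r|b_\alpha|^2$ at $r=1$ from the standard identity $1-|b_\alpha(z)|^2 = (1-|z|^2)(1-|\alpha|^2)/|1-\overline{\alpha}z|^2$; all of these steps check out, including the observation that only the term where $\partial_r$ hits $(1-r^2)$ survives and that the cross-terms in the product rule vanish because each factor has unit modulus on the boundary. Your route is the more classical proof of Carleson's formula in the Blaschke-only case: it is self-contained, treats all roots simultaneously, and produces the factor $\tfrac12$ transparently (the paper has to append a remark explaining a discrepancy of a factor of $2$ between its $H^{1/2}$-type computation and the stated formula). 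What the paper's route buys is uniformity: the corollary appears as the one instance of the general $X$/$Y$ framework of Theorem \ref{main} in which the successive gains can be summed exactly, which is the conceptual point the authors want to make. Note that the paper explicitly mentions that the companion result (Theorem \ref{fullformula}) ``can be alternatively proven using Green's formula,'' so your argument is essentially the one they allude to but do not write out. Your closing caveat is also apt: the corollary is stated for $F\in\mathcal{H}^\infty(\mathbb{D})$, but under the paper's standing regularity assumption the Blaschke product is finite and your computation is complete as written; the general case would indeed require the approximation and monotone-convergence argument you sketch, plus the assumption (implicit in the statement) that the inner factor has no singular part.
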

This result is better than Theorem 1 (which only gives the constant 1 instead of the sum over the Poisson kernel indexed by the roots) but follows from the
same argument that we use to prove Theorem 1. This is due to some
algebraic simplification that seems to only occur for $X = \mathcal{D}, Y = L^2$ and has to do with the fact that for $\alpha \in \mathbb{D}$
$$|z - \alpha| = |1-\overline{\alpha}z| \qquad \mbox{whenever}~|z|=1.$$ 

\subsection{A curious stability property.} 
When doing Blaschke factorization $F = B\cdot G$ numerically, we will introduce some roundoff errors; even though we never actually compute the roots of the functions, this roundoff
error can be imagined as perturbing the roots a little bit. We have the following curious and purely algebraic \textit{pointwise} stability statement.
\begin{theorem} \label{rootstab} Suppose $F_1, F_2:\mathbb{C} \rightarrow \mathbb{C}$ are polynomials having the same roots outside of $\mathbb{D}$ and the same number of roots inside $\mathbb{D}$.
Then the Blaschke factorizations
$$ F_1 = B_1G_1 \qquad \mbox{and} \qquad F_2 = B_2G_2,$$
satisfy
$$ |G_1(z) - G_2(z)| = |F_1(z) - F_2(z)| \qquad \mbox{whenever}\quad |z| =1.$$
\end{theorem}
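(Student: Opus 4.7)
My plan is to reduce the pointwise identity to a single algebraic fact about the reciprocal polynomial on the unit circle. First I would factor out the common exterior roots: since $F_1$ and $F_2$ share every root outside of $\mathbb{D}$, they split as $F_i(z) = R(z) P_i(z)$, where $R(z)=\prod_j(z-\beta_j)$ collects the shared exterior roots and each $P_i$ is a polynomial of the common degree $n$ whose roots $\{\alpha_k^{(i)}\}$ all lie in $\mathbb{D}$. Exterior roots do not contribute to the Blaschke factor, so $B_i$ depends only on the interior roots of $P_i$ and $G_i = R\cdot H_i$ with $H_i := P_i/B_i$. Since $|R(z)|$ occurs identically on both sides of the desired identity, it suffices to prove $|H_1(z)-H_2(z)|=|P_1(z)-P_2(z)|$ on $|z|=1$.

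The heart of the proof is an explicit identification of $H_i$ with the reciprocal polynomial of $P_i$. A one-line computation, obtained simply by cancelling the factors $z-\alpha_k^{(i)}$ in numerator and denominator, gives
$$ H_i(z) \;=\; \prod_{k} \frac{|\alpha_k^{(i)}|}{\overline{\alpha_k^{(i)}}}\bigl(1-\overline{\alpha_k^{(i)}}z\bigr) \;=\; \mu_i \cdot P_i^{*}(z),$$
where $P_i^{*}(z):=\prod_k(1-\overline{\alpha_k^{(i)}}z)$ is the reciprocal (Schur-reverse) polynomial of $P_i$ and $\mu_i := \prod_k \alpha_k^{(i)}/|\alpha_k^{(i)}|$ is a unimodular constant. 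The crucial algebraic identity is the standard formula $P^{*}(z) = z^n\,\overline{P(1/\overline{z})}$, valid for every polynomial $P$ of degree $n$, which on $|z|=1$ (where $1/\overline z = z$) collapses to $P^{*}(z)=z^n\,\overline{P(z)}$. Since $|z^n|=1$ and complex conjugation is a pointwise isometry, this yields the sought pointwise isometry
$$ |P_1^{*}(z)-P_2^{*}(z)| \;=\; |P_1(z)-P_2(z)| \qquad \text{for all } |z|=1.$$

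Combining the two steps, and verifying that the unimodular prefactors $\mu_i$ cancel in the difference $H_1-H_2$, gives
$$ |G_1(z)-G_2(z)| \;=\; |R(z)|\,|P_1^{*}(z)-P_2^{*}(z)| \;=\; |R(z)|\,|P_1(z)-P_2(z)| \;=\; |F_1(z)-F_2(z)|$$
for $|z|=1$, which is the claim. The geometric content is that the reciprocal-polynomial involution $P\mapsto P^{*}$ is a pointwise isometry on $\partial\mathbb{D}$. The delicate point --- and the part of the argument I expect to require the most care --- is the bookkeeping of the unimodular prefactors $\mu_i$ arising from the $\overline{a_k}/|a_k|$-normalization of each Blaschke factor; one has to verify that they can be absorbed in a way consistent with the chosen Blaschke normalization so as not to spoil the isometry in the last step.
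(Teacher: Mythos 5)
Your reduction and your key identity are, in substance, exactly the paper's argument: the paper also strips off the exterior roots, identifies the outer factor of $\prod_k(z-\alpha_k)$ with $\prod_k(1-\overline{\alpha_k}z)$, and then checks by comparing coefficients that the map $\sum_k c_k z^k \mapsto \sum_k \overline{c_{n-k}}\,z^k$ preserves the modulus on $|z|=1$ --- which is precisely your identity $P^{*}(z)=z^n\overline{P(z)}$ for $|z|=1$, stated without expanding the elementary symmetric functions. So the core of your proof is correct and, if anything, more cleanly packaged than the paper's computation.

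The step you flagged as delicate, however, does not work the way you hope, and it is worth being precise about why. The prefactors $\mu_i=\prod_k \alpha_k^{(i)}/|\alpha_k^{(i)}|$ do \emph{not} cancel in $H_1-H_2=\mu_1P_1^{*}-\mu_2P_2^{*}$: they are distinct unimodular constants whenever the interior roots of $F_1$ and $F_2$ have different arguments, and no common factor can be extracted from the difference. Indeed, with the normalization $B(z)=\prod_k\frac{\overline{a_k}}{|a_k|}\frac{z-a_k}{1-\overline{a_k}z}$ from the paper's introduction the theorem is simply false: take $F_1(z)=z-\tfrac{1}{2}$ and $F_2(z)=z-\tfrac{i}{2}$; then $G_1(z)=1-\tfrac{z}{2}$ and $G_2(z)=i-\tfrac{z}{2}$, so $|G_1(z)-G_2(z)|\equiv\sqrt{2}$ while $|F_1(z)-F_2(z)|\equiv\tfrac{\sqrt{2}}{2}$. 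The statement is true only for the unnormalized Blaschke factors $B(z)=\prod_k\frac{z-a_k}{1-\overline{a_k}z}$, i.e.\ for $G_i=R\cdot\prod_k(1-\overline{\alpha_k^{(i)}}z)$ with $\mu_i\equiv 1$ --- which is the convention the paper's proof silently adopts, at odds with its own introduction. So the fix is not to show that the prefactors can be ``absorbed''; it is to remove them from the definition of $B$ (equivalently, to state the theorem for that normalization), after which your two displayed identities finish the proof immediately.
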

This stability property was discovered by accident and seems quite curious. It is not clear to us whether there might be even more general
statements of a similar type.

\subsection{An unwinding series on $\mathbb{R}$.} The inner-outer factorization was the crucial ingredient to our entire approach. A similar
factorization can be achieved on the upper half-space. 
The role of Blaschke products is now played by functions indexed by $\lambda_1, \dots, \lambda_n \in \mathbb{C}_{+}$ of the form
$$B(z) = \prod_{k=1}^{n}{ \frac{z-\lambda_k}{z-\overline{\lambda_k}}}, \qquad \mbox{which satisfies}~|B(z)| = 1~\mbox{ on}~ \mathbb{R}.$$
We will consider norms $\left\| \cdot \right\|_{X}, \left\| \cdot \right\|_{Y}$ on the space
$$ L^2_{+}(\mathbb{R}) = \left\{f \in L^2(\mathbb{R}): \supp(\widehat{f}) \subseteq [0, \infty) \right\}.$$
Let $\psi:[0,\infty] \rightarrow [0,\infty]$ be a monotonically increasing, differentiable function with $\psi(0) = 0$ and
$$ \|f\|^2_{X} := \int_{0}^{\infty}{|\widehat{F}(\xi)|^2 \psi(\xi) d\xi} \qquad \mbox{as well as} \qquad  \|f\|^2_{Y} := \int_{0}^{\infty}{|\widehat{F}(\xi)|^2 \psi'(\xi) d\xi}.$$
\begin{theorem} \label{complex} If $F$ has roots $\lambda_1, \dots, \lambda_n \in \mathbb{C}_{+}$, then
$$  \left\|  F \prod_{i=1}^{n}{\frac{z-\overline{\lambda_k}}{z-\lambda_k}}   \right\|^2_{X} \leq  \|F\|^2_{X}.$$
For the removal of a single root $F(\lambda) = 0$, we have the stronger estimate
$$  \left\|F \frac{z-\overline{\lambda}}{z-\lambda}\right\|^2_{X} \leq  \|F\|^2_{X} - (2 \Im( \lambda ))\left\|F \frac{z-\overline{\lambda}}{z-\lambda}\right\|^2_{Y}.$$
Moreover, in the Dirichlet space $\psi(\xi) = \xi$, we even have
$$  \left\|  F \prod_{i=1}^{n}{\frac{z-\overline{\lambda_k}}{z-\lambda_k}}   \right\|^2_{X} \leq  \|F\|^2_{X} - \int_{\mathbb{R}}{|F(x)|^2 \sum_{i=1}^{n}{\frac{2 \Im( \alpha) }{|x-\lambda_k|^2}} dx},$$
where the sum ranges over all roots of $F$ on $\mathbb{C}_{+}$.
\end{theorem}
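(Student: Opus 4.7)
My plan is to imitate the structure of Theorem~\ref{main}, transferring the argument from the unit circle to the real line via the Fourier transform. The first assertion (monotonicity under removal of all $n$ roots) will follow from the single-root strengthened bound by iteration: the intermediate function $F_k := F \prod_{j \le k} (z-\overline{\lambda_j})/(z-\lambda_j)$ still vanishes at $\lambda_{k+1},\ldots,\lambda_n$ (multiplication by a factor nonzero there cannot destroy roots), and the successive estimates telescope. So the real work is the single-root case.

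Fix a single root $F(\lambda)=0$, write $h(x) := F(x)/(x-\lambda)$, and note that $F(x) = (x-\lambda)h(x)$ while $G(x) := F(x)(x-\overline{\lambda})/(x-\lambda) = (x-\overline{\lambda})h(x)$ on $\mathbb{R}$. Because multiplication by $x$ corresponds to $i\partial_\xi$ on the Fourier side,
\[
\widehat{F}(\xi) = i\widehat{h}'(\xi) - \lambda\widehat{h}(\xi), \qquad \widehat{G}(\xi) = i\widehat{h}'(\xi) - \overline{\lambda}\widehat{h}(\xi).
\]
A direct expansion of $|\widehat{G}|^2 - |\widehat{F}|^2$ causes the $|\widehat{h}'|^2$ and $|\lambda|^2|\widehat{h}|^2$ terms to cancel, leaves cross terms proportional to $\lambda - \overline{\lambda} = 2i\Im(\lambda)$, and after recognising a total derivative yields the clean identity
\[
|\widehat{G}(\xi)|^2 - |\widehat{F}(\xi)|^2 \;=\; 2\Im(\lambda)\,\frac{d}{d\xi}|\widehat{h}(\xi)|^2.
\]
This is the half-plane analogue of the algebraic core of the proofs of Theorem~\ref{main} and Corollary~\ref{halfformula}.

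Multiplying by $\psi(\xi)$ and integrating on $[0,\infty)$, integration by parts converts $\int_0^\infty \psi(\xi)\,\frac{d}{d\xi}|\widehat{h}(\xi)|^2 d\xi$ into $-\int_0^\infty \psi'(\xi)|\widehat{h}(\xi)|^2 d\xi$. The boundary term at $\xi=0$ vanishes because $\psi(0)=0$, and the one at $\xi=\infty$ vanishes because the regularity hypothesis forces rapid decay of $\widehat{h}$. The result is the equality
\[
\|G\|_X^2 \;=\; \|F\|_X^2 - 2\Im(\lambda)\,\|h\|_Y^2,
\]
which is the single-root refined estimate (interpreting $h = F/(z-\lambda)$, equivalently $G/(z-\overline{\lambda})$). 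For the Dirichlet case $\psi(\xi)=\xi$ one has $Y = L^2$ and $\|h\|_Y^2 = \int_\mathbb{R} |F(x)|^2/|x-\lambda|^2\,dx$. When iterating to remove all the roots, the unimodularity of each removed Blaschke factor on $\mathbb{R}$ gives $|F_k(x)|=|F(x)|$ at every stage, so the successive $L^2$ norms $\|F_{k-1}/(x-\lambda_k)\|_{L^2}^2$ combine additively into $\int_\mathbb{R} |F(x)|^2 \sum_k \frac{2\Im(\lambda_k)}{|x-\lambda_k|^2}\,dx$, producing the Dirichlet statement.

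The main technical point that I expect will demand care is the integration by parts: one needs $\psi(\xi)|\widehat{h}(\xi)|^2 \to 0$ as $\xi \to \infty$, and one must make sense of the pointwise derivative $\widehat{h}'$ entering the key identity. Both should be routine under the hypothesis that $F$ extends holomorphically to a neighborhood of the closed upper half-plane (which forces exponential decay of $\widehat{F}$, and after dividing by $x-\lambda$ also of $\widehat{h}$); a fully rigorous write-up would likely establish the algebraic identity distributionally and then justify pairing against $\psi$ by a density argument on Schwartz-like $F$.
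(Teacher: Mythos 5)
Your proposal is correct and follows essentially the same route as the paper: the same Fourier-side identity $|\widehat{G}|^2-|\widehat{F}|^2 = 2\Im(\lambda)\,\tfrac{d}{d\xi}|\widehat{h}|^2$ for the quotient $h=F/(z-\lambda)$, integration by parts against $\psi$ using $\psi(0)=0$, and summation of the single-root gains in the Dirichlet case via unimodularity of the Blaschke factors on $\mathbb{R}$ (the paper merely reuses the letter $F$ for your $h$, and your remark that the gain is naturally expressed through $G/(z-\overline{\lambda})$ rather than $G$ itself is consistent with the proof given there).
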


\section{Computation and application}
In this section we provide a collection of known facts, additional background material, a way of computing the Blaschke factorization without ever having to compute the roots (dating back to a 1962 paper of Guido and Mary Weiss) and some sample applications.
\subsection{Analytic signals.} A classical way of using complex analysis when faced with a periodic, real signal $u(t):[0,2\pi] \rightarrow \mathbb{R}$ 
is to associate a natural imaginary part to the function. Already in 1946 Gabor  \cite{gabor} argued that
\begin{quote}
it has long been recognized that operations with the complex exponential $e^{j \omega t}$ [...] have distinct advantages over operations
with sine or cosine functions.
\end{quote}
and proposed to analyze the signal
$$ f = u + i\mathcal{H}_{i}u \qquad \mbox{instead,}$$
where $\mathcal{H}_{i}$ is the Hilbert transform. Vakman \cite{vakman} proved that requiring certain natural assumptions on the complexification process,
this is the canonical complexification. A convenient fact for actual computation is that if
$$ u(\theta) = \frac{a_0}{2} + \sum_{k \geq 1}{a_k \cos{k \theta} + b_k \sin{k \theta}},$$
then
$$ (\mathcal{H}_{i}u)(\theta) = \sum_{k \geq 1}{a_k \sin{k \theta} - b_k \cos{k \theta}} \quad \mbox{and} \quad  (u+i\mathcal{H}u)(\theta) = \frac{a_0}{2}+\sum_{k \geq 1}{(a_k - i b_k)e^{i k \theta}}.$$

\subsection{The Guido and Mary Weiss algorithm.} Let now $f(\theta)$ be a complex signal (possibly obtained from a real signal using the process
above). Assume additionally that $f(\theta) \neq 0$. Note that any such $f(\theta)$ has only positive frequencies
$$ f(\theta) = \sum_{k=0}^{\infty}{a_k e^{i k \theta}}$$
to which we may associate the function $F:\mathbb{D} \rightarrow \mathbb{C}$
$$ F(z) = \sum_{k = 0}^{\infty}{a_k z^k},$$
which, assuming sufficient regularity, has $f$ as its boundary function. It is now our goal to construct the Blaschke decomposition of
$ F = B \cdot G$ without computing the roots of the function.\\
 The algorithm proceeds as follows.
\begin{quote}
\begin{enumerate}
\item Compute the function $g(\theta) = \log{|f(\theta)|}$.
\item Compute the analytic signal from $g$
$$ h(\theta) = (g + i\mathcal{H}_{i}g)(\theta).$$
\item Then we have the Blaschke factorization $F = B \cdot G$, where
$$ G(\theta) = e^{h(\theta)} \quad \mbox{and} \quad B(\theta) = F(\theta)/G(\theta)$$
on the unit circle.
\end{enumerate}
\end{quote}
Clearly, the algorithm won't work whenever there is a root on the boundary of the unit disk because
$\log{|f(\theta)|}$ will be unbounded; also, whenever $|f(\theta)|$ becomes very small, the algorithm
becomes unstable. Various ways for additional stabilization have been proposed: the stabilizing effect
of adding a small constant has been investigated by Nahon \cite{nahon} whereas Letelier and Saito \cite{let}
propose adding a small pure sinusoid.

\subsection{Removal of multiplicative noise.} We return to the analogy
$$B \sim \mbox{frequency and}~G \sim \mbox{amplitude}.$$
$B$ is constructed from $F$ by its roots inside the unit disk; conversely, given $F(e^{i\cdot})$ as boundary
data, we can uniquely reconstruct the values of $F$ inside $\mathbb{D}$ by convolving with the boundary
data with the Poisson kernel. This compact integral operator enjoys a variety of smoothing properties; as
a consequence it is stable against all sorts of perturbations (assuming they roughly preserve the local
averages).
\begin{figure}[h!]
\includegraphics[width = 0.6\textwidth]{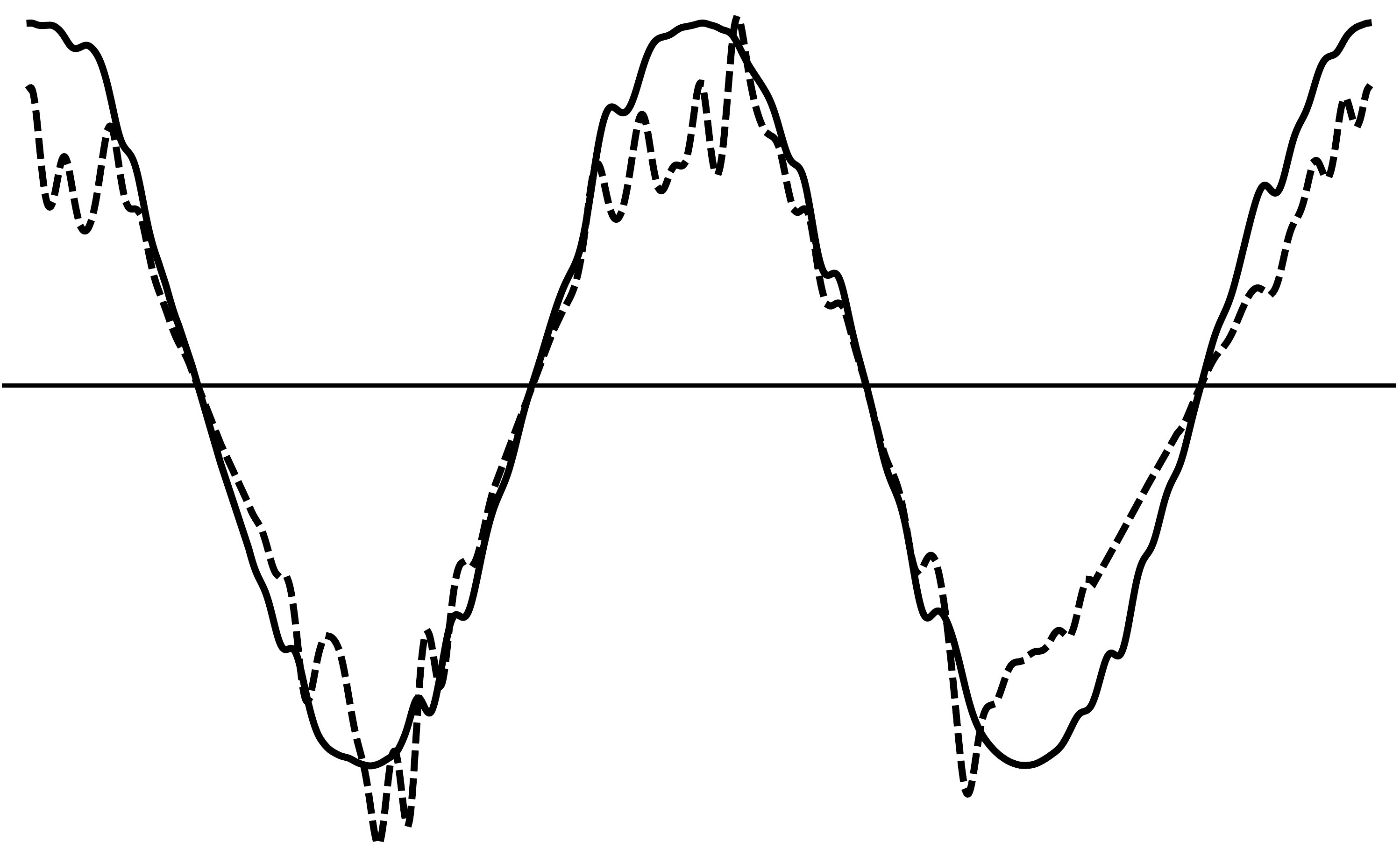}
\caption{An example of a multiplicatively perturbed pure frequency $\cos{(2\theta)}$ (dashed) on $\mathbb{T}$ and the resulting curve after
two rounds of filtering.}
\end{figure}
A particular example given in (Fig. 4) consists of a function of the type
$$ f(\theta) = \cos{(2\theta)} \left( \sum_{n=1}^{50}{\alpha_n\frac{\cos{n\theta}}{\sqrt{n}} + \beta_n\frac{\sin{n\theta}}{\sqrt{n}}} \right),$$
where $\alpha, \beta$ are instances of i.i.d. $\mathcal{N}(0,1)$ random variables. We complexify the signal $F$ and replace it by
$F(\theta)/|F(\theta)|$. The outcome of two iterations of this process is shown in Fig. 4.
A similar example can be found in work of Letelier \& Saito \cite{let} and Healy \cite{healy, healy2}.

\subsection{The instanteous phase problem.} Given a complex signal, we may write
it in polar coordinates as
$$ F(\theta) = |F(\theta)|e^{i\phi(\theta)}.$$
It is of interest in practice to understand how fast the frequency changes; naturally, if all quantities are well defined,
$$ F' = |F|'e^{i \phi} + i \phi' |F|e^{i \phi}$$
and thus
$$ \frac{F'}{F} = \frac{|F|'}{|F|} + i \phi'.$$
However, even assuming sufficient smoothness, the direct computation of the instaneneous frequency via this identity can be challenging and numerically unstable; various
methods have been proposed (including one using Blaschke products due to Picinbono \cite{pic}).
Blaschke products are well known to have the following particularly nice property: if the Blaschke product has
finitely many roots, then
$$  e^{im \theta}\prod_{k}{\frac{\overline{\alpha_k}}{|a_k|}\frac{e^{i\theta}-\alpha_k}{1-\overline{\alpha_k}e^{i \theta}}} = |F(\theta)|e^{i \phi(\theta)},$$
and one has
$$ \phi'(\theta) = m + \sum_{k }{\frac{1-|\alpha_k|^2}{|e^{i \theta} - \alpha_k|^2}} > m \geq 0.$$
The unwinding series is therefore an approximation using strictly increasing frequencies, which 
greatly stabilizes numerical computation (see \cite{nahon} for details).

\section{Proofs}
\subsection{T. Qian's Theorem.} We start by giving a brief summary and proof of T. Qian's theorem. This material is not new
and can be found in \cite{qtao5}, however, that paper may not be easily accessible. 

\begin{thm}[T. Qian, \cite{qtao5}] The unwinding series converges in $L^2$ for all
$$ F(\theta) = \sum_{n \geq 0}{a_n e^{i n \theta}} \quad \mbox{with} \quad \sum_{n=0}^{\infty}{|a_n|^2} < \infty.$$
\end{thm}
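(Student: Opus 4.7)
The plan is to reinterpret the unwinding series as an orthogonal expansion of $F$ in the system $\{B_1 B_2\cdots B_n\}_{n\geq 1}$, obtain $L^2$ convergence of the partial sums from Bessel's inequality, and then identify the $L^2$ limit with $F$ itself via a Beurling-type invariant-subspace argument.

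The first step is to verify that these partial products form an orthogonal family in $L^2(\partial\mathbb{D})$ with $\|B_1\cdots B_n\|_{L^2}^2 = 2\pi$, and that $\langle F, B_1\cdots B_n\rangle_{L^2} = 2\pi\, a_n$. Both statements reduce to one algorithmic observation: since $B_{n+1}$ is extracted from $G_n(z) - G_n(0)$, which vanishes at the origin, $B_{n+1}(0)=0$ for every $n\geq 1$. Using $|B_j|\equiv 1$ on the boundary together with $\int \overline{h(e^{i\theta})}\,d\theta = 2\pi\,\overline{h(0)}$ for $h\in\mathcal{H}^2$, every off-diagonal inner product $\langle B_1\cdots B_k, B_1\cdots B_n\rangle$ (for $1\leq k<n$) collapses to $2\pi\,\overline{(B_{k+1}\cdots B_n)(0)}=0$. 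Expanding $F$ as $\sum_{k=1}^n a_k B_1\cdots B_k + B_1\cdots B_{n+1}G_{n+1}$ and pairing against $B_1\cdots B_n$ then yields $\langle F, B_1\cdots B_n\rangle = 2\pi a_n$. Bessel's inequality gives $\sum_n|a_n|^2\leq \|F\|_{L^2}^2/(2\pi)$, so the partial sums $S_N:=\sum_{n=1}^N a_n B_1\cdots B_n$ are Cauchy in $L^2$ and converge to some $F_\ast\in\mathcal{H}^2$. Because $|B_1\cdots B_{N+1}|\equiv 1$ on the boundary, $\|F-S_N\|_{L^2} = \|G_{N+1}\|_{L^2}$, so the theorem reduces to showing $F = F_\ast$.

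For the final identification I would introduce the nested family $V_n := B_1\cdots B_n\cdot \mathcal{H}^2\subset \mathcal{H}^2$ of Beurling-invariant subspaces. Each remainder $F - S_{n-1} = B_1\cdots B_n\,G_n$ lies in $V_n$, and since multiplication by the inner function $B_1\cdots B_n$ is an $L^2$-isometry, $V_n$ is closed. Hence the strong $L^2$ limit $R := F-F_\ast$ belongs to $\bigcap_n V_n$. Because $B_k(0)=0$ for every $k\geq 2$, any element of $V_n$ vanishes at the origin to order at least $n-1$, so $R$ has a zero of infinite order at $0$; being holomorphic in $\mathbb{D}$, $R$ must vanish identically, giving $F = F_\ast$ and $\|G_n\|_{L^2}\to 0$. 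The main obstacle I expect is precisely this last step: Bessel is weaker than Parseval, and the products $B_1\cdots B_n$ are not the classical Malmquist-Takenaka system (they lack the reproducing-kernel factors $\sqrt{1-|\alpha|^2}/(1-\overline{\alpha}z)$), so completeness in $L^2$ is not automatic and in principle depends on the data-dependent choice of zeros. What rescues the argument is the purely structural fact that every $B_n$ with $n\geq 2$ carries a forced zero at the origin; this single observation simultaneously underlies the orthogonality computation and forces $\bigcap_n V_n=\{0\}$.
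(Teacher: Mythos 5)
Your proof is correct, and its first half (pairwise orthogonality of the products $B_1\cdots B_n$ and of the remainder against them, all resting on $|B_j|\equiv 1$ on $\partial\mathbb{D}$, the mean-value identity, and the forced zeros $B_k(0)=0$ for $k\geq 2$) coincides with the paper's. The difference is in how the limit is identified with $F$. The paper never invokes Bessel or an invariant-subspace intersection: it notes that the remainder $B_1\cdots B_n(G_n-G_n(0))$ is orthogonal to $\left\{z^k: 0\leq k\leq n-1\right\}$ --- the same structural fact as your $V_n\subseteq z^{n-1}\mathcal{H}^2$ --- and, since the remainder equals $F$ minus terms orthogonal to it, deduces directly that
$$ \left\|B_1\cdots B_n\,(G_n-G_n(0))\right\|_{L^2(\mathbb{T})}^2 \leq \sum_{k\geq n}|a_k|^2, $$
with $a_k$ the Fourier coefficients of $F$. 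This is sharper than what you obtain: it shows the unwinding series converges at least as fast as the Fourier series of $F$, whereas the Bessel-plus-$\bigcap_n V_n=\{0\}$ route yields convergence with no rate. Your soft argument is nonetheless sound as written --- multiplication by an inner function is an isometric embedding of $\mathcal{H}^2$, so each $V_n$ is closed, the $V_n$ are nested, and an $\mathcal{H}^2$ function lying in $z^{n-1}\mathcal{H}^2$ for every $n$ has all Fourier coefficients zero --- and you correctly isolate the forced zero at the origin as the single fact driving both the orthogonality and the completeness. Two small points to tidy: you overload $a_n$ (Fourier coefficients of $F$ in the statement, unwinding coefficients $G_n(0)$ in your Bessel step), and placing the remainder in $V_n$ requires $G_n\in\mathcal{H}^2$, which you should justify via $\|G_n\|_{L^2(\partial\mathbb{D})}=\|F-S_{n-1}\|_{L^2}\leq\|F\|_{L^2}$ together with $G_n$ being outer (or the paper's standing assumption that all functions are holomorphic across $\partial\mathbb{D}$).
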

\begin{proof}[Proof from \cite{qtao5}] We first write the unwinding series in a slightly different way: since $0$ is always a root in the iteration scheme,
we may write it as
$$ F(z) = F(0) + \gamma_1 z B_1 + \gamma_2 z^2 B_1 B_2 + \gamma_3 z^3 B_1 B_2 B_3 + \dots + z^n B_1 B_2 \dots B_n (G-G(0)).$$
We  remark that any two of the Blaschke terms are orthogonal on $L^2(\mathbb{T})$: if $\ell < m$, then
$$ \int_{0}^{2\pi}{ \overline{\gamma_{\ell} e^{i t \ell} \left(\prod_{k=1}^{\ell}{B_k(e^{it})}\right)}  \gamma_m e^{i t m} \left(\prod_{k=1}^{m}{B_k(e^{it})}\right) dt} =  \int_{0}^{2\pi}{ \overline{\gamma_{\ell}}  \gamma_m e^{i t (m-\ell)} \left(\prod_{k=\ell+1}^{m}{B_k(e^{it})}\right) dt} = 0$$
because $|B_k(e^{it})| = 1$ and the remaining term is holomorphic. We furthermore observe that the last term is orthogonal to all previous terms since the inner product simplifies by the same computation to
$$ \int_{0}^{2\pi}{ \overline{\gamma_{\ell}}  \gamma_n e^{i t (n-\ell)} \left(\prod_{k=\ell+1}^{n}{B_k(e^{it})}\right) (G(e^{it}) - G(0)) dt} = 0$$
 This immediately implies that
\begin{align*}
 \|F(e^{it})\|^2_{L^2(\mathbb{T})} &=  \|F(0)\|^2_{L^2(\mathbb{T})} +  \left\|\gamma_1 e^{it} B_1(e^{it})\right\|^2_{L^2(\mathbb{T})}  + \dots\\
&+  \left\|e^{i n t}B_1(e^{it}) B_2(e^{it}) \dots B_n(e^{it})(G(e^{it})-G(0))  \right\|^2_{L^2(\mathbb{T})}.
\end{align*}
However, we can also guarantee that the remainder term is small by showing that it is orthogonal to all $\left\{z^k: 0 \leq k \leq n-1\right\}$ since
$$ \int_{0}^{2\pi}{ \gamma_n e^{i t n} \left(\prod_{k=1}^{n}{B_k(e^{it})}\right) (G(e^{it})-G(0)) e^{-i k t} dt} = 0.$$
This implies
$$  \left\|e^{i n t}B_1(e^{it}) B_2(e^{it}) \dots B_n(e^{it}) (G(e^{it})-G(0))  \right\|^2_{L^2(\mathbb{T})} \leq \sum_{k=n}^{\infty}{|a_k|^2},$$
which then implies convergence as $n \rightarrow \infty$.
\end{proof}
The proof shows that convergence will happen at least as quickly as Fourier series but potentially much faster since
low-lying terms can already contain some part of the high-frequency contributions. It would be interesting to quantifying how precisely this happens.

\subsection{Proof of Theorem \ref{main}.}
 We study the action of moving a single root from inside the unit disk $\mathbb{D}$ to the outside (inversion along the unit circle). Let $|\alpha| < 1$
be the root; we compare
$$ f(z) = (z-\alpha)F(z) \qquad \mbox{and} \qquad g(z) = (1-\overline{\alpha}z)F(z)$$
on the boundary $\partial \mathbb{D}$.
Expanding $F\big|_{\partial \mathbb{D}}$ into a Fourier series
$$ F(z)\big|_{\partial \mathbb{D}} = \sum_{n=0}^{\infty}{a_n z^n},$$
we immediately get
\begin{align*}
 f(z)\big|_{\partial \mathbb{D}} = -\alpha a_0 + \sum_{n=1}^{\infty}{(a_{n-1}-\alpha a_n) z^n} \quad \mbox{and} \quad g(z)\big|_{\partial \mathbb{D}} &=  a_0 + \sum_{n=1}^{\infty}{(a_{n}-\overline{\alpha} a_{n-1}) z^n}.
\end{align*}
From the definition of $\|\cdot\|_{X}$, we compute
\begin{align*}
  \|f(z)\big|_{\partial \mathbb{D}}\|^2_{X} &= \gamma_0 |\alpha|^2 |a_0|^2 + \sum_{n=1}^{\infty}{ \gamma_n |a_{n-1}-\alpha a_n|^2}  \\
&= \gamma_0 |\alpha|^2 |a_0|^2 + \sum_{n=1}^{\infty}{ \gamma_n \left( |a_{n-1}|^2 - \overline{\alpha} a_{n-1} \overline{a_n} - \alpha a_n \overline{a_{n-1}} + |\alpha|^2 |a_n|^2\right)}
\end{align*}
and
\begin{align*}
  \|g(z)\big|_{\partial \mathbb{D}}\|^2_{X} &= \gamma_0 |a_0|^2 + \sum_{n=1}^{\infty}{ \gamma_n |a_{n}-\overline \alpha a_{n-1}|^2}  \\
&= \gamma_0 |a_0|^2 + \sum_{n=1}^{\infty}{ \gamma_n \left( |a_{n}|^2 - \overline{\alpha} a_{n-1} \overline{a_n} - \alpha a_n \overline{a_{n-1}} + |\alpha|^2 |a_{n-1}|^2\right)}.
\end{align*}
We see that the mixed terms appear in both sums and cancel: subtraction yields
\begin{align*} \|f(z)\big|_{\partial \mathbb{D}}\|^2_{X} - \|g(z)\big|_{\partial \mathbb{D}}\|^2_{X} &= -\gamma_0 (1-|\alpha|^2)|a_0|^2 + (1-|\alpha|^2)\sum_{n=1}^{\infty}{\gamma_n(\left|a_{n-1}\right|^2 - \left|a_n\right|^2)}\\
&= (1-|\alpha|^2)\sum_{n=0}^{\infty}{(\gamma_{n+1}-\gamma_{n})\left|a_{n}\right|^2}\\
&= (1-|\alpha|^2)\|F\big|_{\partial \mathbb{D}}\|^2_{Y}.
\end{align*}
This equation has a nice and definite form but we will only use it in one instance. Let us assume we are given $F(Z)$ and a finite list of roots $\left\{\alpha_1, \alpha_2, \dots, \alpha_n\right\} \subset \mathbb{D}$.
We know, by construction, that at least one of the roots is 0 and we assume without loss of generality that $\alpha_n = 0$. Then we can consider the sequence of functions
\begin{align*}
F(z) &= (z-\alpha_1)(z-\alpha_2)(z-\alpha_3)\dots (z-\alpha_{n-1})  (z-\alpha_n) H(z) \\
F_1(z) &= (1-\overline{\alpha_1}z)(z-\alpha_2)(z-\alpha_3)\dots  (z-\alpha_{n-1})  (z-\alpha_n) H(z) \\
F_2(z) &= (1-\overline{\alpha_1}z)(1-\overline{\alpha_2}z)(z-\alpha_3)\dots  (z-\alpha_{n-1})  (z-\alpha_n) H(z) \\
\dots \\
F_{n-1}(z)&= (1-\overline{\alpha_1}z)(1-\overline{\alpha_2}z)(1-\overline{\alpha_3}z)\dots  (1-\overline{\alpha_{n-1}}z)  (z-\alpha_n) H(z) 
\end{align*}
and we can conclude from the computation that
$$ \|F(z)\big|_{\partial \mathbb{D}}\|_{X} \geq  \|F_1(z)\big|_{\partial \mathbb{D}}\|_{X} \geq \dots \geq  \|F_{n-1}(z)\big|_{\partial \mathbb{D}}\|_{X}.$$
Clearly, the outer function $G$ in the Blaschke decomposition $F = B \cdot G$ will be given by
$$G(z) = (1-\overline{\alpha_1}z)(1-\overline{\alpha_2}z)(1-\overline{\alpha_3}z)\dots  (1-\overline{\alpha_{n-1}}z)  (1-\overline{\alpha_{n}}z) H(z) $$
In the final step, we use the fact that there is always one root satisfying $\alpha_n = 0$ and exploit the full strength of the argument to conclude that
\begin{align*} \|F_{n-1}(z) \big|_{\partial \mathbb{D}}  \|^2_{X} -  \|F_{n}(z) \big|_{\partial \mathbb{D}}\|^2_{X} = \left\|  \prod_{k=1}^{n-1}{(1-\overline{\alpha_k}z)} H(z)   \big|_{\partial \mathbb{D}}\right\|^2_{Y} = \|  G(z)   \big|_{\partial \mathbb{D}}\|^2_{Y}.
\end{align*}
More, generally, if there is no root in 0, then applying the same argument yields
\begin{align*} \|F_{n-1}(z) \big|_{\partial \mathbb{D}}  \|^2_{X} -  \|F_{n}(z) \big|_{\partial \mathbb{D}}\|^2_{X} &= (1-|\alpha_n|^2) \left\|  \prod_{k=1}^{n-1}{(1-\overline{\alpha_k}z)} H(z)   
\big|_{\partial \mathbb{D}}\right\|^2_{Y} \\
&= (1-|\alpha_n|^2)  \left\|  \frac{G(z)}{1-\overline{\alpha_n}z}   \big|_{\partial \mathbb{D}} \right\|^2_{Y}.
\end{align*}
This concludes the argument. $\qed$\\

The last part of the argument highlights a fundamental difficulty: while there is an effective gain every time we move a
root to the outside, it is not clear to us how the sum of these gains could be properly controlled (which is why we only take the last one). This we only managed
to do in the case of the Dirichlet space, where an additional (algebraic) simplification takes place.

\subsection{Proof of Theorem \ref{fullformula}.} We study again the action of moving a single root to the outside by inversion along the unit circle.
The computation resembles the computation in the more general case except that we are able to invoke Green's formula
at the end of the argument.
\begin{lemma}
Let $F$ be analytic in a neighborhood of the origin and $a \in \mathbb{C}$ with $|a| < 1$. If
$$ f = (z-a)F \qquad \mbox{and} \qquad g = (1-\overline{a}z)F$$
then
$$  \int_{0}^{2\pi}{| g'(e^{i\theta})|^2 dt} \leq \int_{0}^{2\pi}{| f'(e^{i\theta})|^2 dt} - (1-|a|^2)\int_{0}^{2\pi}{|F(e^{it})|^2dt}$$
whenever all terms are defined and finite.
\end{lemma}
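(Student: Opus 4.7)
I would attack this lemma by a direct pointwise computation on the unit circle, exploiting the fact that $|z-a|$ and $|1-\overline{a}z|$ agree there. First, I would write $f'(z) = F(z) + (z-a)F'(z)$ and $g'(z) = -\overline{a}F(z) + (1-\overline{a}z)F'(z)$ and expand $|f'|^2$ and $|g'|^2$ into four terms each. Using $\overline{z}=1/z$ on $|z|=1$, one immediately checks that $|z-a|^2 = |1-\overline{a}z|^2$, so the two $|F'|^2$ contributions in the difference $|f'|^2-|g'|^2$ cancel identically on the boundary.

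Next I would collect the remaining mixed terms, namely the ones linear in $F\overline{F'}$ and in $\overline{F}F'$. A short algebraic manipulation gives the clean coefficient
$$(\overline{z}-\overline{a}) + \overline{a}(1-a\overline{z}) = (1-|a|^2)\overline{z},$$
together with its conjugate for the other cross term. Combining these with the pure $|F|^2$ contribution from the constant parts of $f'$ and $g'$ yields the pointwise identity on $|z|=1$:
$$|f'(z)|^2 - |g'(z)|^2 = (1-|a|^2)\Bigl[|F(z)|^2 + 2\,\mathrm{Re}\bigl(z\,\overline{F(z)}\,F'(z)\bigr)\Bigr].$$

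To finish, I would integrate this identity over $[0,2\pi]$ and show that the contribution from $2\,\mathrm{Re}(z\overline{F}F')$ is non-negative. This is essentially the area theorem: expanding $F(z)=\sum_{n\ge 0} a_n z^n$ and using orthogonality of the monomials gives
$$\int_0^{2\pi}\mathrm{Re}\bigl(e^{i\theta}\overline{F(e^{i\theta})}F'(e^{i\theta})\bigr)\,d\theta \;=\; 2\pi\sum_{n\ge 0} n\,|a_n|^2 \;\ge\; 0.$$
Equivalently, via Green's theorem applied to $\Delta|F|^2 = 4|F'|^2$, this integral equals $2\int_{\mathbb{D}}|F'|^2\,dA$, which is manifestly non-negative. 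Discarding this term yields the claimed inequality.

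I do not anticipate a serious obstacle here; the whole argument is essentially an algebraic identity once one recognizes that $|z-a|=|1-\overline{a}z|$ on the unit circle, which is the same phenomenon that makes Blaschke factors unimodular. The only place requiring some care is the bookkeeping of signs and conjugates in the cross-term simplification, but the $(1-|a|^2)$ factor emerges transparently once that is done.
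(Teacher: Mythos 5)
Your proposal is correct and follows essentially the same route as the paper: the same expansion of $f'$ and $g'$, the same cancellation of the $|F'|^2$ terms via $|z-a|=|1-\overline{a}z|$ on $\partial\mathbb{D}$, and the same reduction of the cross terms to $(1-|a|^2)\,2\,\mathrm{Re}(z\overline{F}F')$. The only (cosmetic) difference is the final positivity step, where you use Fourier orthogonality, i.e. the area theorem $\int_0^{2\pi}\mathrm{Re}(e^{i\theta}\overline{F}F')\,d\theta = 2\pi\sum_{n}n|a_n|^2 \ge 0$, while the paper reaches the same quantity geometrically as the winding-number-weighted area enclosed by $\gamma_F$ via Green's theorem; these are the same fact in two guises.
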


\begin{proof} 

Obviously
$$ f' = F + (z-a)F'$$
and thus
$$ |f'|^2 = |F|^2 + F\overline{(z-a)F'} + \overline{F}(z-a)F' + |z-a|^2|F'|^2.$$
At the same time
\begin{align*} 
g' &= -\overline{a}F + (1-\overline{a}z)F' \\
 |g'|^2 &= |a|^2|F|^2 - a\overline{F}(1-\overline{a}z)F' - \overline{a}F\overline{(1-\overline{a}z)F'} +|1-\overline{a}z|^2|F'|^2.
\end{align*}
If $|z| = 1$, then $|z-a|^2 = |1-\overline{a}z|^2$ and since we only integrate over $\partial \mathbb{D}$, we get
$$ \int_{\partial \mathbb{D}}{  |f'|^2  -  |g'|^2 } = (1-|a|^2)\int_{\partial \mathbb{D}}{|F|^2} \\
+ \int_{\partial \mathbb{D}}{ F\overline{(z-a)F'} + \overline{F}(z-a)F'  + a\overline{ F}(1-\overline{a}z)F'  + \overline{a}F\overline{(1-\overline{a}z)F'}   }.$$
This is already almost what we want, it remains to show that

$$  \int_{\partial \mathbb{D}}{ F\overline{(z-a)F'} + \overline{F}(z-a)F'  + a\overline{F}(1-\overline{a}z)F'  +\overline{a}F\overline{(1-\overline{a}z)F'}}   \geq 0.$$
The expression can be rewritten as
$$  \int_{\partial \mathbb{D}}{ F\overline{F'} ( \overline{(z-a)} +\overline{a}\overline{(1-\overline{a}z)}) + \overline{F}F' ((z-a) + a(1-\overline{a}z)) },$$
which is 
 $$  (1-|a|^2)\int_{\partial \mathbb{D}}{ F\overline{F'} \overline{z} + \overline{F}F'z},$$
Now we go back from the classical derivative $f'(z)$ to the angular derivative along the boundary of the disk $\dot{f}(z)$. As before
$$\frac{d}{d\theta}f(e^{i\theta}) = f'(e^{i\theta})e^{i\theta}i$$
which can be rewritten as
$$ f'(z) = -i\overline{z} \dot{f}(z) \qquad \mbox{whenever}~|z| = 1.$$
Using this, we can rewrite the terms as
$$  F \overline{z} \overline{F'} = F \overline{z} \overline{(-i)}z \overline{\dot{F}} = i F  \overline{\dot{F}}   \qquad \mbox{whenever}~|z| = 1$$
$$  F' \overline{z} \overline{F} = - i z\overline{z} \dot{F} \overline{F} = - i \dot{F}  \overline{F}   \qquad \mbox{whenever}~|z| = 1.$$
We need to show that
$$ i\int_{\partial \mathbb{D}}{F  \overline{\dot{F}}  -\dot{F}  \overline{F}   } \geq 0.$$
If we write
$$ F(e^{it}) = x(t) + i y(t),$$
then
$$i (F  \overline{\dot{F}}  -\dot{F}  \overline{F}) = 2(x(t) \dot{y}(t) - \dot{x}(t) y(t)).$$
The problem consists now of evaluating
$$ \int_{\partial \mathbb{D}}{2(x(t) \dot{y}(t) - \dot{x}(t) y(t)) dt}.$$
This corresponds to integrating the vector field
$$ 2(-y, x) \qquad \mbox{over the curve} \qquad \gamma(t) = (x(t), y(t)).$$
Green's theorem states that this implies
$$ \int_{\partial \mathbb{D}}{2(x(t) \dot{y}(t) - \dot{x}(t) y(t)) dt} = 4A \geq 0,$$
where $A$ is the area of the domain enclosed by the curve $\gamma$ (weighted at each point with the winding number with respect to $\gamma$).
\end{proof}
If $F = B \cdot G$ has more than one root in $\mathbb{D}$, Lemma 1 can be applied iteratively.

\begin{proof}[Proof of Theorem 2.] The previous language establishes a relationship between $f,g$ and $F$. However, only the \textit{modulus} of $F$ ever appears in the argument: exploiting that 
$$ |z-\alpha_i| = |1-\overline{\alpha_i}z| \qquad \mbox{whenever} \quad |z| = 1$$
allows for a better using of the gain obtained from iterative application of the previous Lemma when inverting several roots along the unit circle. More precisely, consider again
\begin{align*}
F(z) &= (z-\alpha_1)(z-\alpha_2)(z-\alpha_3)\dots (z-\alpha_{n-1})  (z-\alpha_n) H(z) \\
F_1(z) &= (1-\overline{\alpha_1}z)(z-\alpha_2)(z-\alpha_3)\dots  (z-\alpha_{n-1})  (z-\alpha_n) H(z) \\
F_2(z) &= (1-\overline{\alpha_1}z)(1-\overline{\alpha_2}z)(1-\overline{\alpha_3}z)\dots  (z-\alpha_{n-1})  (z-\alpha_n) H(z) \\
\dots \\
F_{n-1}(z)&= (1-\overline{\alpha_1}z)(1-\overline{\alpha_2}z)(1-\overline{\alpha_3}z)\dots  (1-\overline{\alpha_{n-1}}z)  (z-\alpha_n) H(z) 
\end{align*}
The crucial new ingredient is that 
$$ \|F(z)\big|_{\partial \mathbb{D}}\|_{L^2(\partial \mathbb{D})} =  \|F_1(z)\big|_{\partial \mathbb{D}}\|_{L^2(\partial \mathbb{D})} = \dots =  \|F_{n-1}(z)\big|_{\partial \mathbb{D}}\|_{L^2(\partial \mathbb{D})}.$$
The very same reason allows for a more precise analysis of the effect removing one root has.  Let again $|\alpha| < 1$ be the root; we compare
$$ f(z) = (z-\alpha)F(z) \qquad \mbox{and} \qquad g(z) = (1-\overline{\alpha}z)F(z)$$
on the boundary $\partial \mathbb{D}$.
The same computation as before yields
$$ \|f(z)\big|_{\partial \mathbb{D}}\|^2_{\mathcal{D}} - \|g(z)\big|_{\partial \mathbb{D}}\|^2_{\mathcal{D}} = (1-|\alpha|^2)\|F\big|_{\partial \mathbb{D}}\|^2_{L^2} =  (1-|\alpha|^2)\left\|\frac{f}{z-\alpha}\big|_{\partial \mathbb{D}}\right\|^2_{L^2} .
$$
In particular, all the arising expressions can be summed in closed form and the arising gain is 
$$\|F\|_{\mathcal{D}}^2 - \|G\|_{\mathcal{D}}^2 = \int_{0}^{2\pi}{|G(e^{it})|^2 \sum_{j}{\frac{1-|a_j|^2}{|e^{it}-\alpha_j|^2}}dt}.$$
\end{proof}
There is a difference of a factor 2 in the way we stated the Carleson's formula and the proof above: this is due to the fact that we computed the effect on what
turns out to be $H^{1/2}$ whereas the Dirichlet space in Carleson's formula also has the $\mathcal{H}^2-$norm (which stays preserved since $|F| = |G|$),
hence the difference of a factor of 2.

\subsection{Proof of Corollary \ref{fullderiv}} We have 
$$ \int_{0}^{2\pi}{| G_{n+1}'(e^{i\theta})|^2 d\theta} \leq  \int_{0}^{2\pi}{| G_n'(e^{i\theta})|^2 d\theta}  - \int_{\partial \mathbb{D}}{|G_n(e^{i\theta})-G_n(0)|^2 d\theta},$$
but there is no way of turning this into a quantitative decay estimate because the gain
$$  \int_{\partial \mathbb{D}}{|G_n(e^{i\theta})-G_n(0)|^2 d\theta} \qquad \mbox{need not be proportional to the size of} \qquad  \int_{0}^{2\pi}{| G_n'(e^{i\theta})|^2 d\theta} .$$
Put geometrically, $G_n(e^{i\theta})$ may wind around $G_n(0)$ very quickly while $|G_n(e^{i\theta})-G_n(0)|$ could be quite small all the time. The crucial insight is as follows: if that were actually the case and $\|G_n(e^{i\theta})-G_n(0)\|_{L^2(\mathbb{T})}$ is small,
then one would certainly hope that $\|G_n(e^{i\theta})-G_n(0)\|_{L^{\infty}(\mathbb{T})}$ is also small. 
Now we reverse the order of that argument: suppose that $\|G_n(e^{i\theta})-G_n(0)\|_{L^{\infty}(\mathbb{T})}$ is \textit{not} small. This means that $|G_n(e^{i\theta})-G_n(0)|$ is
big for some $\theta$, which does not at all mean that the function is large in $L^2$, it could just be big in one place and very small everywhere else: this, however, would imply
that the $L^2-$norm of the gradient is large and we know it cannot exceed that of the initial data.

\subsubsection{Sobolev embedding.} The second ingredient of the argument may be formulated as follows: let $h:\mathbb{T} \rightarrow \mathbb{R}$ be differentiable. If $\|h'\|_{L^2(\mathbb{T})}$ is not very big and $\|h\|_{L^{\infty}(\mathbb{T})}$ has a certain size,
then $\|h\|_{L^2{(\mathbb{T})}}$ cannot be arbitrarily small (depending on the first two quantities): the only way to be big in $L^{\infty}(\mathbb{T})$ but small in $L^2(\mathbb{T})$ is quick decay around the point
where the supremum is assumed. The inequality is merely the classical embedding of the Sobolev space $H^1(\mathbb{T}) \hookrightarrow L^{\infty}(\mathbb{T})$.
\begin{lemma} Let $h:\mathbb{T} \rightarrow \mathbb{R}$ be a differentiable function which changes sign. Then
$$ \|h\|_{L^2}^2 \geq \frac{1}{16} \frac{\|h\|^4_{L^{\infty}}}{\|h'\|^2_{L^2}}.$$
\end{lemma}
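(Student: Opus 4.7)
The plan is to prove this as a fairly standard Sobolev embedding argument, with the sign change hypothesis playing the role of providing a reference zero of $h$. The right-hand side is scale-invariant under $h \mapsto \lambda h$ and under reparametrizing $t$, and the inequality is tight for an appropriately scaled tent function, which guides the extremal picture: $h$ should look like a spike of height $M$ over an interval of length $\sim M^2/\|h'\|_{L^2}^2$ adjacent to a zero.

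Concretely, I would normalize so $M := \|h\|_{L^\infty}$ is attained with $h(t_0)=M>0$ (applying the argument to $-h$ otherwise). Since $h$ changes sign, by the intermediate value theorem there is at least one zero; let $t_1$ be the zero nearest to $t_0$, so that $h>0$ on the open arc from $t_1$ to $t_0$ and the arc length $L := |t_0 - t_1|$ is well defined. By the fundamental theorem of calculus and Cauchy--Schwarz,
\begin{equation*}
M \;=\; h(t_0)-h(t_1) \;=\; \int_{t_1}^{t_0} h'(s)\,ds \;\leq\; \sqrt{L}\,\|h'\|_{L^2},
\end{equation*}
which gives $L \geq M^2/\|h'\|_{L^2}^2$. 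The same Cauchy--Schwarz bound applied from $t_0$ backwards yields, for any $s$,
\begin{equation*}
h(s) \;\geq\; M - \sqrt{|t_0-s|}\,\|h'\|_{L^2}.
\end{equation*}

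Now I set $r := M^2/(4\|h'\|_{L^2}^2)$. From the displayed bound, $h(s)\geq M/2$ on the interval $[t_0-r,t_0]$, and from the previous paragraph we have $r \leq L/4 \leq L$, so this interval lies inside $[t_1,t_0]$ where $h$ is already known to be positive (so there is no cancellation issue from the sign change). Integrating,
\begin{equation*}
\|h\|_{L^2}^2 \;\geq\; \int_{t_0-r}^{t_0}|h(s)|^2\,ds \;\geq\; r\cdot(M/2)^2 \;=\; \frac{1}{16}\,\frac{M^4}{\|h'\|_{L^2}^2},
\end{equation*}
which is the claimed inequality. The only step requiring care is the one-sided choice of interval $[t_0-r,t_0]$ rather than $[t_0-r,t_0+r]$: using the two-sided interval would give a better constant but requires knowing $h\geq M/2$ on both sides of $t_0$, which is not guaranteed since a zero could lie close to $t_0$ on the other side. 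Restricting to the side containing the nearest zero (and then staying strictly inside the arc to that zero) costs a factor of two and yields the stated $1/16$. I do not expect any serious obstacle; the hypothesis that $h$ changes sign is essential, since without it the inequality fails for nonzero constants where $\|h'\|_{L^2}=0$.
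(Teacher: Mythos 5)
Your proof is correct, and it takes a genuinely different route from the paper's. The paper argues globally from the zero: after translating so that $h(0)=0$ and choosing $x$ with $|h(x)|=\|h\|_{L^\infty}$, it writes $\|h\|_{L^\infty}^2=|h(x)|^2-|h(0)|^2=\int_0^x (h^2)'\,dz=\int_0^x 2hh'\,dz$ and applies Cauchy--Schwarz to the product $hh'$, obtaining $\|h\|_{L^\infty}^2\lesssim \|h\|_{L^2}\|h'\|_{L^2}$ in one line (done carefully this even yields the constant $1/4$ rather than $1/16$). You instead argue locally near the maximum: Cauchy--Schwarz on $\int h'$ shows $h\geq M/2$ on an arc of length $r=M^2/(4\|h'\|_{L^2}^2)$ ending at the maximizer, and integrating $|h|^2$ over that arc gives exactly $1/16$. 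Both proofs use the sign-change hypothesis only to produce a zero, but they use it in slightly different ways: the paper needs the zero as the base point of the FTC identity, while you need it to certify that $r\leq L/4\leq 2\pi$, so that $[t_0-r,t_0]$ is a genuine sub-arc of $\mathbb{T}$ and the final integral does not wrap around (this is precisely where the counterexample $h=1+\varepsilon\sin t$ is excluded). Your version is longer but has the advantage of producing an explicit interval on which $h$ is pointwise large, which is the quantitative picture the paper appeals to informally when motivating the lemma; the paper's version is shorter and gives a better constant.
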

\begin{proof} Assume without loss of generality\ that $h(0) = 0$. Assume $x$ to be such that $|h(x)| = \|h\|_{L^{\infty}}$. 
Using the Cauchy-Schwarz inequality, we get
$$ \|h\|_{L^{\infty}(\mathbb{T})}^{2} = |h(x)|^2  \leq 4  \left(\int_{0}^{x}{ |h(z)|^2 dz} \right)^{\frac{1}{2}} \left(\int_{0}^{x}{ |h'(z)|^2 dz} \right)^{\frac{1}{2}}.$$
Squaring both sides gives the result.
\end{proof}
\subsubsection{Random projections.} 

We apply the statement to a curve $\gamma_F:\partial \mathbb{D} \rightarrow \mathbb{C}$, which is different object than a periodic function $h:\mathbb{T} \rightarrow \mathbb{R}$. The natural 
approach to reduce one to the other would be to fix a vector $n \in \mathbb{R}^2$ with unit length $|\nu| = 1$ and consider the projection
$$ h(t) = \left\langle \gamma(t), \nu \right\rangle.$$
The next Lemma states that there exists a unit vector $\nu$ such this reduction does not change the $L^2-$norm and $L^{\infty}-$norm by more than an absolute constant:
$$\int_{0}^{2\pi}{|\gamma(t)|^2 dt} \leq  6  \int_{0}^{2\pi}{|\left\langle \nu,\gamma(t)\right\rangle|^2 dt}. \quad \mbox{and} \quad |h(0)|= |\left\langle \gamma(0), \nu\right\rangle| \geq \frac{1}{\sqrt{2}} |\gamma(0)|.$$

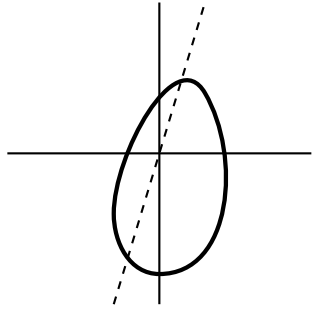
\begin{figure}[h!]
\begin{center}
\begin{tikzpicture}[scale=2]
\draw [thick] (-1,0) -- (1,0);
\draw [thick] (0,-1) -- (0,1);
\draw [ultra thick] (0,-0.8) to [out=0,in=300] (0.3,0.4)  to [out=120,in=90] (-0.3,-0.4)  to [out=270,in=180] (0,-0.8);
\draw [thick, dashed] (-0.3,-1) -- (0.3,1);
\end{tikzpicture}
\end{center}
\caption{Assuming arclength-parametrization, projecting the curve onto the dashed line preserves both $L^2-$norm and the $L^{\infty}-$norm up to absolute constants.}
\end{figure}

It is easy to see by Cauchy-Schwarz that $h$ varies slower than $\gamma$
$$ \left|\frac{d}{dt}h(t)\right| = \left|\left\langle \gamma'(t), \nu\right\rangle \right| \leq |\gamma'(t)|.$$
Therefore, after having established the existence of such a vector $n$ and reparametrizing the curve in such a way that $|h(0)| \geq 2^{-1/2}\max_t |\gamma(t)|$, we could deduce that
$$  \int_{0}^{2\pi}{|\gamma(t)|^2 dt} \geq \int_{0}^{2\pi}{|h(t)|^2 dt}  \geq \frac{1}{16}\frac{\|h\|^4_{L^{\infty}}}{\|h'\|^2_{L^2}} \geq \frac{1}{64} \frac{\max_t |\gamma(t)|^4}{\int_{0}^{2\pi}{|\gamma'(z)|^2 dz}}$$
which is a quantitative version of our intuition described above: in order for the function to be big at some point, it cannot be too small on average. Let us now prove the statement. The argument says that it is sufficient to take that vector at random to have the desired property to be true on average
(in particular, there exists at least one vector for which it is true).
\begin{lemma} Let $\gamma:\mathbb{T} \rightarrow \mathbb{R}^2$ be a periodic curve in the plane and assume $\gamma(0) \neq (0,0)$. Then there exists a unit vector $|\nu| = 1$ with
$$ |\left\langle \gamma(0), \nu\right\rangle| \geq \frac{1}{\sqrt{2}} |\gamma(0)|$$
as well as
$$ \int_{0}^{2\pi}{|\gamma(t)|^2 dt} \leq  6  \int_{0}^{2\pi}{|\left\langle \nu,\gamma(t)\right\rangle|^2 dt}. $$
\end{lemma}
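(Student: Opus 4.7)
The plan is to reduce the statement to a clean Parseval-type identity in the plane. Let $v_0 = \gamma(0)/|\gamma(0)|$, choose a unit vector $v_0^{\perp}$ orthogonal to $v_0$, and parametrize every unit vector as $\nu_\theta = \cos\theta \cdot v_0 + \sin\theta \cdot v_0^{\perp}$. I would then study the function
$$ X(\theta) := \int_0^{2\pi} \langle \gamma(t), \nu_\theta\rangle^2\, dt \qquad \text{together with} \qquad M := \int_0^{2\pi} |\gamma(t)|^2\, dt.$$

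The key observation is that for every $\theta$, the pair $\{\nu_\theta, \nu_{\theta+\pi/2}\}$ is an orthonormal basis of $\mathbb{R}^2$, so pointwise $|\gamma(t)|^2 = \langle \gamma(t), \nu_\theta\rangle^2 + \langle \gamma(t), \nu_{\theta+\pi/2}\rangle^2$. Integrating in $t$ yields the clean identity
$$ X(\theta) + X(\theta + \pi/2) = M \qquad \text{for every } \theta.$$

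Next I would specialize to $\theta = \pi/4$. Since $\langle \gamma(0), v_0\rangle = |\gamma(0)|$ and $\langle \gamma(0), v_0^{\perp}\rangle = 0$ by the choice of basis, a direct computation gives
$$ |\langle \gamma(0), \nu_{\pi/4}\rangle| = |\langle \gamma(0), \nu_{3\pi/4}\rangle| = \frac{|\gamma(0)|}{\sqrt{2}},$$
so both candidate vectors satisfy the first required inequality with equality. The additive identity $X(\pi/4) + X(3\pi/4) = M$ forces at least one of the two values to satisfy $X \geq M/2$; choosing that $\nu$ yields the second inequality with the improved constant $2$ in place of the claimed $6$.

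This is a deterministic shortcut for the randomized picture sketched before the statement: averaging $\langle \gamma(t), \nu\rangle^2$ over a uniformly random unit vector $\nu$ returns $|\gamma(t)|^2/2$, which is exactly the identity above in expectation. In two dimensions a single orthogonal pair already realizes this average, so no genuine probabilistic argument is needed. I do not foresee any real obstacle beyond verifying these elementary identities; the subtle point is only to notice that one is free to rotate the orthonormal frame by $\pi/4$ so that the frame directions themselves lie in the $1/\sqrt{2}$-cone around $\pm \gamma(0)$.
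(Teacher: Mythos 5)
Your proof is correct, and it takes a genuinely different route from the paper. The paper chooses $\nu$ uniformly at random from the arc of directions within angle $\pi/4$ of $\gamma(0)$, lower-bounds $\int_{\alpha-\pi/4}^{\alpha+\pi/4}|\langle(\cos s,\sin s),\ell\rangle|^2\,ds$ by $|\ell|^2(\pi-2)/4$ for each fixed $\ell$, exchanges the order of integration, and concludes by pigeonhole on the average; this yields the constant $2\pi/(\pi-2)\approx 5.51$, stated as $6$. You instead replace the continuous average by a two-point pigeonhole: the exact Parseval identity $\langle\gamma(t),\nu_\theta\rangle^2+\langle\gamma(t),\nu_{\theta+\pi/2}\rangle^2=|\gamma(t)|^2$ integrated in $t$ gives $X(\pi/4)+X(3\pi/4)=M$, and both $\nu_{\pi/4}$ and $\nu_{3\pi/4}$ meet the cone condition with equality, so one of them satisfies $M\le 2X$. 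This is sharper (constant $2$ instead of $6$), entirely deterministic, and avoids the trigonometric integral; it would also slightly improve the implied constant in Corollary \ref{fullderiv}, where this lemma is used. The paper's averaging formulation is the one that would adapt to settings where one cannot rotate the frame so that both basis vectors land in the admissible cone (e.g.\ higher-dimensional targets or narrower cones), but in the planar case at hand your argument is strictly better. The only point worth making explicit in a write-up is the orthonormality of the pair $\{\nu_\theta,\nu_{\theta+\pi/2}\}$, which you have correctly identified as the crux.
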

\begin{proof} The line from the origin to $\gamma(0)$ defines a unique angle $\alpha$. Let us now chose $\nu$ randomly from $\alpha - \pi/4$ and $\alpha + \pi/4$. Any such vector
satisfies the first condition and we will now compute the expectation of the $L^2-$norm for such a random vector. We first remark that for every fixed vector $\ell \in \mathbb{R}^2$ and every
$0 \leq \alpha \leq 2\pi$ a simple computation shows that
\begin{align*}
\int_{\alpha-\frac{\pi}{4}}^{\alpha+\frac{\pi}{4}}{|\left\langle (\cos{s}, \sin{s}),\ell\right\rangle|^2 ds} &\geq |\ell|^2 \int_{-\frac{\pi}{4}}^{\frac{\pi}{4}}{|\left\langle (\cos{s}, \sin{s}),(0,1)\right\rangle|^2 ds}\\
&\geq  |\ell|^2  \int_{-\frac{\pi}{4}}^{\frac{\pi}{4}}{\left(\sin{s}\right)^2ds} \\
&= |\ell|^2 \frac{\pi-2}{4}.
\end{align*}
We now compute the expectation by using this and exchanging the order of integration 
\begin{align*}
\frac{2}{\pi}\int_{\alpha-\frac{\pi}{4}}^{\alpha+\frac{\pi}{4}}{\int_{0}^{2\pi}{|\left\langle (\cos{s}, \sin{s}),\gamma(t)\right\rangle|^2 dt}ds} &=
\frac{2}{\pi}  \int_{0}^{2\pi}{   \int_{\alpha-\frac{\pi}{4}}^{\alpha+\frac{\pi}{4}}{  |\left\langle (\cos{s}, \sin{s}),\gamma(t)\right\rangle|^2 ds}dt} \\
 &\geq \frac{2}{\pi} \frac{\pi -2}{4} \int_{0}^{2\pi}{|\gamma(t)|^2dt}.
\end{align*}
Since $(2\pi)/(\pi-2) \leq 6$ and since a random vector has that expectation, there exists at least one vector with that value.
\end{proof}

\subsubsection{Proof of Corollary \ref{fullderiv}.} 
The monotonicity formula implies that
$$  \int_{0}^{2\pi}{|(G_n(e^{it})-G_n(0))'|^2 dt } = \int_{0}^{2\pi}{|G_n'(e^{it})|^2 dt} \qquad \mbox{is monotonically decreasing in}~n.$$
Suppose that for some $n$ and some $z \in \partial \mathbb{D}$
$$ |G_n(z) - G_n(0)| \geq \varepsilon.$$
We can identify $G_n(z)-G_n(0):\partial \mathbb{D} \rightarrow \mathbb{C}$ with a curve $\gamma:\mathbb{T} \rightarrow \mathbb{R}^2$ and reparametrize it using our Lemma so that
$$ |\left\langle \gamma(0), \nu\right\rangle | \geq \frac{\varepsilon}{\sqrt{2}}$$
and
$$ \int_{0}^{2\pi}{|\gamma(t)|^2 dt} \leq  6\int_{0}^{2\pi}{|\left\langle \nu,\gamma(t)\right\rangle|^2 dt}. $$
We can now apply our second Lemma to the function
$$ h(t) = \left\langle \gamma(t), \nu\right\rangle.$$
Since $G_n(z)-G_n(0)$ has winding number at least 1, so has $\gamma$ and therefore $h$ vanishes at least in two points. $h$ and $G_n(z) - G_n(0)$ have comparable $L^{\infty}(\mathbb{T})-$norm (up to a factor of $\sqrt{2}$) and comparable $L^2(\mathbb{T})-$norm (up to a factor of 6)
and elementary geometric considerations (projections make vectors only smaller) show that the derivative of $h$ satisfies
$$ |h'(t)| \leq | \gamma'(t)|.$$
We can now apply Lemma 2 and conclude that
$$  \int_{0}^{2\pi}{|(G_n(e^{it})-G_n(0))|^2 dt} \sim \|h\|_{L^2}^2 \gtrsim \frac{\|h\|^4_{L^{\infty}}}{\|h'\|^2_{L^2}} \geq \frac{\varepsilon^4}{ \int_{0}^{2\pi}{|F'(e^{i\theta})|^2 d\theta} },$$
where the second inequality follows from the assumption that $ |G_n(z) - G_n(0)| \geq \varepsilon$ and the fact that the $L^2-$norm of the derivative is decreasing.
Now, let's look at the next step in the algorithm, where we decompose
$$ G_n - G_n(0) = B G_{n+1}.$$
Our inequality tells us that the squared $L^2-$norm of the derivative decreases at least by a factor of (using $|B|=1$ for Blaschke products)
$$ \int_{0}^{2\pi}{|G_{n+1}(e^{it})|^2 dt} =  \int_{0}^{2\pi}{|G_n(e^{it}) - G_n(0) |^2 dt} \gtrsim \frac{\varepsilon^4}{ \int_{0}^{2\pi}{|F'(e^{i\theta})|^2 d\theta} }.$$
This yields
\begin{align*}
 \int_{0}^{2\pi}{| G_{n+1}'(e^{i\theta})|^2 dt} &\leq  \int_{0}^{2\pi}{| G_n'(e^{i\theta})|^2 dt}  - \int_{0}^{2\pi}{|G_n(e^{it})-G_n(0)|^2 dt}  \\
&\leq  \int_{0}^{2\pi}{| G_n'(e^{i\theta})|^2 dt}  - c \frac{\varepsilon^4}{ \int_{0}^{2\pi}{|F'(e^{i\theta})|^2 d\theta} }
\end{align*}
for some universal constant $c  > 0$.
However, since all the involved quantities are nonnegative, this immediately implies that number of $n \in \mathbb{N}$ for which 
$$ |G_n(z) - G_n(0)| \geq \varepsilon \qquad \mbox{for some}~z \in \partial \mathbb{D}$$
is bounded from above by
$$ \left( \int_{0}^{2\pi}{|F'(e^{i\theta})|^2 d\theta} \right) / \left( c \frac{\varepsilon^4}{ \int_{0}^{2\pi}{|F'(e^{i\theta})|^2 d\theta} } \right) =\frac{1}{c}\frac{\left( \int_{0}^{2\pi}{|F'(e^{i\theta})|^2 d\theta}\right)^2 }{\varepsilon^4}.$$
This concludes the argument. $\qed$

\subsection{Proof of Corollary \ref{halfformula}} 
 We recall the action of removing one root which entails comparing
$$ f(z) = (z-\alpha)F(z) \qquad \mbox{and} \qquad g(z) = (1-\overline{\alpha}z)F(z).$$
As was shown in the proof of Theorem \ref{main}, we have the identity
$$ \|f(z)\big|_{\partial \mathbb{D}}\|^2_{X} - \|g(z)\big|_{\partial \mathbb{D}}\|^2_{X} = (1-|\alpha|^2)\|F\big|_{\partial \mathbb{D}}\|^2_{Y}. $$
In the Dirichlet space $X = \mathcal{D}$, we have $\gamma_n = n$ and thus $Y = L^2$. In the proof of Theorem 1, we used the monotonicity formula to remove all roots
and applied the full strength of the inequality only once for a root that is in the origin. Here, the special algebraic structure of the space allows us to apply the inequality
multiple times \textit{and sum all the contributions in closed form}. 
The crucial ingredient that makes this possible is the algebraic identity on $\partial \mathbb{D}$ for all $|\alpha|, |\beta| < 1$
$$ |z-\alpha||z-\beta| = |1-\overline{\alpha}z||z-\beta|  = |1-\overline{\alpha}z||1-\overline{\beta}z| \qquad \mbox{whenever}~|z|=1.$$
We will now illustrate the effect of applying the
identity twice (to remove two roots $\alpha, \beta$ from $\mathbb{D}$).
 The arising functions are
$$ f_1(z) = (z-\alpha) (z-\beta)F(z), \quad f_2(z) = (1-\overline{\alpha}z) (z-\beta)F(z)\quad \mbox{and} \quad  f_3(z) = (1-\overline{\alpha}z)  (1-\overline{\beta}z) F(z).$$
Applying the identity twice yields
\begin{align*}
 \|f_1(z)\big|_{\partial \mathbb{D}}\|^2_{\mathcal{D}} - \|f_2(z)\big|_{\partial \mathbb{D}}\|^2_{\mathcal{D}} &= \|(z-\beta)F\big|_{\partial \mathbb{D}}\|^2_{L^2}  \\
 \|f_2(z)\big|_{\partial \mathbb{D}}\|^2_{\mathcal{D}} - \|f_3(z)\big|_{\partial \mathbb{D}}\|^2_{\mathcal{D}} &= \|(1-\overline{\alpha}z)F\big|_{\partial \mathbb{D}}\|^2_{L^2}
\end{align*}
Normally, we would be unable to sum up these two contributions, however, here the algebraic identity implies that
$$  \|(z-\beta)F\big|_{\partial \mathbb{D}}\|^2_{L^2} =  \left\| \frac{f_1(z)}{z-\alpha}\big|_{\partial \mathbb{D}}\right\|^2_{L^2}$$
and
$$ \|(1-\overline{\alpha}z)F\big|_{\partial \mathbb{D}}\|^2_{L^2} = \|(z-\alpha)F\big|_{\partial \mathbb{D}}\|^2_{L^2} =  \left\| \frac{f_1(z)}{z-\beta}\big|_{\partial \mathbb{D}}\right\|^2_{L^2}$$
This allows us to simplify
$$  \|(z-\beta)F\big|_{\partial \mathbb{D}}\|^2_{L^2} +  \|(1-\overline{\alpha}z)F\big|_{\partial \mathbb{D}}\|^2_{L^2} = \int_{\partial \mathbb{D}}{|f_3(z)|^2 \left( \frac{1}{|z-\alpha|^2} +  \frac{1}{|z-\beta|^2} \right)}.$$
 The general case for more sums follows by the same reasoning. $\qed$\\

The argument can be easily summarized as saying that the algebraic structure of $X = \mathcal{D}$ implies that $Y = L^2$; the additional
algebraic ingredient is $|B(e^{i\theta})| = 1$ which implies that the various $\|F\big|_{\partial \mathbb{D}}\|^2_{Y}$ one gets from successive
removal of roots can actually be summed up in closed form.

\subsection{Proof of Theorem \ref{rootstab}}
\begin{proof} The statement is pointwise and invariant under multiplication with polynomials having all roots outside of $\mathbb{D}$: it thus suffices to prove it for polynomials having all their roots inside of $\mathbb{D}$. We write
$$ F_1 = \prod_{i=1}^n{(z-\alpha_i)}   \qquad \mbox{and} \qquad F_2 = \prod_{i=1}^n{(z-\beta_i)}.$$
Obviously
$$ B_1 = \prod_{i=1}^n{\frac{z-\alpha_i}{1-\overline{\alpha_i}z}}   \qquad \mbox{as well as} \qquad B_2 =  \prod_{i=1}^n{\frac{z-\beta_i}{1-\overline{\beta_i}z}}$$
and thus
$$ G_1 = \prod_{i=1}^n{(1-\overline{\alpha_i}z)}   \qquad \mbox{as well as} \qquad G_2 =  \prod_{i=1}^n{(1-\overline{\beta_i}z)}.$$
An explicit computation yields that
$$ G_1(z) - G_2(z) = \sum_{k=0}^{n}{z^k \left( \sum_{A \subset \left\{1, \dots, n\right\} \atop |A| = k}{\prod_{j \in A}{(-\overline{\alpha_j})}} 
-   \sum_{B \subset \left\{1, \dots, n\right\} \atop |B| = k}{\prod_{j \in B}{(-\overline{\beta_j})}} \right)}$$
while
$$ F_1(z) - F_2(z) = \sum_{k=0}^{n}{z^k \left( \sum_{A \subset \left\{1, \dots, n\right\} \atop |A| = n-k}{\prod_{j \in A}{(-\alpha_j)}} 
-   \sum_{B \subset \left\{1, \dots, n\right\} \atop |B| = n-k}{\prod_{j \in B}{(-\beta_j)}} \right)}.$$
Altogether, this implies that if
$$ G_1(z) - G_2(z) = \sum_{k=1}^{n}{c_k z^k} \qquad \mbox{then} \qquad F_1(z) - F_2(z) = \sum_{k=1}^{n}{\overline{c_{n-k}} z^k}.$$
It remains to show that both quantities have the same norm if $|z| = 1$
\begin{align*}
|F_1(z) - F_2(z)|  &=  \left|  \sum_{k=1}^{n}{\overline{c_{n-k}} z^k} \right| =   \left| \frac{1}{z^n} \sum_{k=1}^{n}{\overline{c_{n-k}} z^k} \right| =  \left|  \sum_{k=1}^{n}{\overline{c_{n-k}} \left(\frac{1}{z}\right)^{n-k}} \right| \\ 
&=  \left|  \sum_{k=1}^{n}{\overline{c_{n-k}} ~\overline{z}^{n-k}} \right| =  \left|  \sum_{k=1}^{n}{c_{n-k}z^{n-k}} \right| = \left| G_1(z) -G_2(z) \right|.
\end{align*}
\end{proof}

\subsection{Proof of Theorem \ref{complex}}
\begin{proof} We imitate the argument in the case of Fourier series and again study the effect of removing one root by comparing
$$ f(z) = (z-\alpha)F(z) \qquad \mbox{and} \qquad g(z) = (z-\overline{\alpha})F(z).$$
Note that
$$ \widehat{F}(\xi) = i \frac{d}{d\xi} \widehat{F}(\xi) - \alpha \widehat{F}(\xi)  \qquad \mbox{and} \qquad \widehat{g}(\xi) = i \frac{d}{d\xi} \widehat{F}(\xi) - \overline{\alpha} \widehat{F}(\xi).$$
and therefore
\begin{align*}
\|f\|_{X}^2 - \|g\|_{X}^2 &= \int_{0}^{\infty}{\left(\left| i \frac{d}{d\xi} \widehat{F}(\xi) - \alpha \widehat{F}(\xi)\right|^2 - \left|  i \frac{d}{d\xi} \widehat{F}(\xi) - \overline{\alpha} \widehat{F}(\xi)\right|^2\right) \psi(\xi) d\xi}
\end{align*}
After simple computation we arrive at
$$\left| i \frac{d}{d\xi} \widehat{F}(\xi) - \alpha \widehat{F}(\xi)\right|^2 - \left|  i \frac{d}{d\xi} \widehat{F}(\xi) - \overline{\alpha} \widehat{F}(\xi)\right|^2 = i(\alpha - \overline{\alpha})\left[\widehat{F}' \overline{\widehat{F}} + \widehat{F} \overline{\widehat{F}'}\right].$$
Writing $\widehat{F}(\xi) = a(\xi) + i b(\xi)$ as real and imaginary parts, we see that
$$\widehat{F}' \overline{\widehat{F}} + \widehat{F} \overline{\widehat{F}'}=  2( a'a + b'b).$$
This implies that we can write
$$\left| i \frac{d}{d\xi} \widehat{F}(\xi) - \alpha \widehat{F}(\xi)\right|^2 - \left|  i \frac{d}{d\xi} \widehat{F}(\xi) - \overline{\alpha} \widehat{F}(\xi) \right|^2 =i(\alpha - \overline{\alpha}) \frac{d}{d \xi} |\widehat F(\xi)|^2$$
and therefore with integration by parts
\begin{align*}
\|f\|_{X}^2 - \|g\|_{X}^2 = \int_{0}^{\infty}{\left(i(\alpha - \overline{\alpha}) \frac{d}{d \xi} |\widehat F(\xi)|^2\right) \psi(\xi) d\xi} = 2 \Im \alpha  \int_{0}^{\infty}{ |\widehat F(\xi)|^2 \psi'(\xi) d\xi},
\end{align*}
which is clearly nonnegative because $\alpha \in \mathbb{C}_{+}$. It remains to study the special case of the Dirichlet space: if $\psi(\xi) = \xi$ we have with Plancherel that
$$ \int_{0}^{\infty}{ |\widehat F(\xi)|^2 \psi'(\xi) d\xi} = \int_{0}^{\infty}{ |\widehat{F}(\xi)|^2 d\xi} = \int_{\mathbb{R}}^{}{|F(\xi)|^2 d\xi}.$$
The key ingredient is again of an algebraic nature: the difference can be quantified in terms of a quantity whose behavior can be controlled while removing several roots one after
the other. Let us illustrate this again with
$$ f_1(z) = (z-\alpha) (z-\beta)F(z), \quad f_2(z) = (z-\overline{\alpha}) (z-\beta)F(z)\quad \mbox{and} \quad  f_3(z) = (z-\overline{\alpha})(z-\overline{\beta}) F(z).$$
Applying the identity twice yields
\begin{align*}
 \|f_1(z)\|^2_{\mathcal{D}} - \|f_2(z)\|^2_{\mathcal{D}} &= \|(z-\beta)F\|^2_{L^2}  \\
 \|f_2(z)\|^2_{\mathcal{D}} - \|f_3(z)\|^2_{\mathcal{D}} &= \|(z-\overline{\alpha})F\|^2_{L^2}
\end{align*}
and we see once more that the sum of the gain can be controlled. This yields the statement.
\end{proof}

\end{document}